\theoremstyle{theorem}
\newtheorem{thm}{Theorem}[section]
\newtheorem{lem}[thm]{Lemma}
\newtheorem{hyp}[thm]{Hypothesis}
\theoremstyle{definition}
\newtheorem{remark}[thm]{Remark}
\newtheorem{counterexample}[thm]{Counterexample}
\newcommand{\de}{\mathrm{d}}
\newcommand{\R}{\mathbb{R}}
\newcommand{\N}{\mathbb{N}}
\DeclareMathAlphabet{\mathpzc}{OT1}{pzc}{m}{it}
\providecommand{\keywords}[1]{\textbf{\textit{Keywords: }}#1}
\title{A Stochastic Gronwall Lemma and Well-Posedness of Path-Dependent SDEs Driven by Martingale Noise}
\author{Sima Mehri\footnotemark[1]\ \footnotemark[2]\ \footnotemark[3] \and Michael Scheutzow\footnotemark[1]}
\begin{document}

\maketitle
\renewcommand{\thefootnote}{\fnsymbol{footnote}}
\footnotetext[1]{Institut f\"ur Mathematik, Technische Universit\"at Berlin, D-10623 Berlin, Germany}
\footnotetext[2]{Department of Mathematical Sciences, Sharif University of Technology, Tehran, Iran}
\footnotetext[3]{The work of this author was supported by the Hilda Geiringer Scholarship awarded by the Berlin Mathematical School}
\renewcommand{\thefootnote}{\arabic{footnote}}

\begin{abstract}
We show   existence and uniqueness of solutions of stochastic path-dependent differential equations driven by c\`adl\`ag martingale noise under joint local monotonicity and coercivity assumptions on the coefficients with a bound in terms of the  supremum norm. In this set-up the usual proof using the ordinary Gronwall
  lemma together with the  Burkholder-Davis-Gundy inequality seems impossible. In order to solve this problem, we prove  a new and quite general stochastic Gronwall lemma for c\`adl\`ag martingales using Lenglart's inequality. 
\end{abstract}

\keywords{stochastic Gronwall lemma, functional stochastic differential equations, path-dependent stochastic differential equations, martingale inequality, monotone coefficients, Lenglart inequality}


\bigskip 
\section{Introduction}
Fix $\tau>0$ and let $(\Omega,\mathcal{F},(\mathcal{F}_t)_{t\geq 0}, \mathbb{P})$ be a {\em normal} filtered  probability space, i.e.~the space is complete and  satisfies the usual conditions. Consider the following stochastic delay differential equation in $\mathbb{R}^d$:
\begin{equation}\label{Chapter1-mao-equ}
\begin{dcases}\de X(t)=f(t,\omega, X_{t-\tau:t})\de t+\int_U g(t,\omega, X_{t-\tau:t},\xi)\tilde{M}(\de t,\de \xi),\\X(t)=z(t), \quad t\in[-\tau,0],\end{dcases}
\end{equation}
where $X_{t-\tau:t}(s)=X(t+s), s\in[-\tau,0]$ and $z\in L^2(\Omega,\mathcal{F}_0,\mathbb{P}; \text{C\`adl\`ag}([-\tau,0],\mathbb{R}^d))$. 

We will state precise assumptions on $\tilde M$ later. At the moment, assume that $U=U_1\sqcup U_2$, where $U_1$ is a
finite or infinite subset of $\N$ and the integral over $U_1$ is a sum, where $\tilde M_t(i)$, $i \in U_1$ are independent
Wiener processes and the remaining integral over $U_2$ is with respect to compensated Poisson noise which is independent of
the Wiener processes. If $U_2=\emptyset$, then we speak of {\em Wiener} or {\em diffusive} noise, otherwise of
{\em jump diffusive} noise. In the diffusive case, several authors established existence and uniqueness of solutions of  \eqref{Chapter1-mao-equ} under
various conditions on the coefficients (e.g.~\cite[Theorem 5.2.5]{mao2007stochastic} under local Lipschitz and linear growth
assumptions on $f$ and $g$ and \cite{von2010existence} under a  one-sided local Lipschitz and a suitable growth condition).
Under similar conditions, \cite{wei2007existence} and  \cite{ren2008remarks} show existence and uniqueness even for
equations with infinite delay and \cite{MS03} (see also \cite{CS13}) proved not only existence and uniqueness but also pathwise continuous dependence
of the solution on the initial condition in case $g$ does not depend on the past (otherwise it is known that pathwise 
continuous dependence on the initial condition does not hold in general, see \cite{MS97}).
Existence and uniqueness results in the jump diffusive case under a local Lipschitz and linear growth condition (even with
additional Markovian switching) were obtained in  \cite{zhu2017razumikhin}.

In both the existence and the uniqueness proof one typically encounters  the following inequality for some non-negative adapted process  $Z$,
\begin{equation}\label{Chapter1-Z-ineq}
Z(t)\leq K\int_0^t Z^\star(s)\de s+M(t)+H(t),
\end{equation}
where $Z^\star (s)=\sup_{u\in[0,s]} Z(u)$, $M$ is a local martingale (depending on the function $g$ in the equation), the process $H(t), t\geq 0$ is non-decreasing adapted, and $K>0$ is a constant.
In order to apply  Gronwall's lemma, the expression inside the integral should be the same as the expression on the left side of the inequality. Taking the supremum on both sides of \eqref{Chapter1-Z-ineq}
and then taking expectations, an upper bound for $\mathbb{E}M^\star(t)$ in terms of the process $Z$ is required. Under a local one-sided Lipschitz condition of the form
\begin{equation}\label{Chapter1-local monotone scheutzow}
\begin{gathered}\text{For all compact subset } \mathcal{C}\subset C([-\tau,0],\mathbb{R}^d)\text{ there exists } L_{\mathcal{C}}>0\text{ and } \\ \tau_{\mathcal{C}}\in (-\tau,0]\text{ such that }\forall x,y\in  \mathcal{C}\text{ with } x(s)=y(s)\ \forall s\in [-\tau,-\tau_{\mathcal{C}}]\\ 2\left\langle x(0)-y(0), f(x)-f(y)\right\rangle+\left\lvert g(x)-g(y)\right\rvert^2\leq L_{\mathcal{C}} \sup_{s\in [-\tau,0]}\left\lvert x(s)-y(s)\right\rvert^2,\end{gathered}
\end{equation}
as in \cite{von2010existence}, controls with respect to the  supremum norm on  $g$ are not separated from $f$ and it therefore seems impossible to use the Burkholder-Davis-Gundy inequality to obtain an
upper bound for $\mathbb{E}M^\star(t)$ in this case. 

The paper \cite{von2010existence} dealt with this problem by proving the following stochastic Gronwall's inequality for the above mentioned process $Z$ and for $p\in (0,1)$ and $\alpha>\frac{1+p}{1-p}$:
	\[\mathbb{E}\left[( Z^\star(T))^p\right]\leq c_1 e^{c_2KT}\left(\mathbb{E}\left[H(T)^\alpha\right]\right)^{p/\alpha}, \quad\forall T\geq 0.\]
        Here $c_1$ and $c_2$ are two constants that only depend on $p$ and $\alpha$ and $Z$, $H$, and $M$ are assumed to have continuous paths (in addition to the properties stated above).

        One can find another type of stochastic Gronwall lemma in the literature where $Z^\star(s)K\de s$ in the assumption is replaced by $Z(s^-)\de A(s)$ for an adapted non-decreasing stochastic process $A$
        (see \cite{scheutzow2013stochastic} for continuous processes, \cite{zhang2018singular} for c\`adl\`ag processes and \cite{kruse2018discrete} for discrete time processes).

Whenever the supremum norm in condition \eqref{Chapter1-local monotone scheutzow} is replaced  by a real-valued continuous linear operator, say $\lambda$, on $\text{C\`adl\`ag}([-\tau,0],\mathbb{R})$, then there is no problem using the ordinary Gronwall's lemma. In \cite{mehri2018propagation}, we have  stated the well-posedness of equation \eqref{Chapter1-mao-equ} driven by jump diffusion under the local monotonicity assumption,
\begin{equation}\label{Chapter1-plus monotone L2}
\begin{gathered}\forall R>0,\ \exists L_R\in L^1_{loc}(\mathbb{R}_{\geq 0},\mathbb{R}_{\geq 0}),\ \forall x,y\in  \text{C\`adl\`ag}([-\tau,0],\mathbb{R}^d)\\\text{ with }\sup_{s\in[-\tau,0]}\left\lvert x(s)\right\rvert ,\sup_{s\in[-\tau,0]}\left\lvert y(s)\right\rvert<R:\\2\left\langle x(0^-)-y(0^-), f(t,\omega,x)-f(t,\omega,y)\right\rangle+ \int_U \left\lvert g(t,\omega,x,\xi)-g(t,\omega,y,\xi)\right\rvert^2\nu_t(\de \xi)\\\leq L_R(t) \lambda\left(\left\lvert x(\cdot)-y(\cdot)\right\rvert^2\right), \end{gathered}
\end{equation}
and coercivity assumption,
\begin{equation}\label{Chapter1-plus coercive L2}
\begin{gathered}\exists K\in L^1_{loc}(\mathbb{R}_{\geq 0},\mathbb{R}_{\geq 0}),\ \forall x\in  \text{C\`adl\`ag}([-\tau,0],\mathbb{R}^d):\\2\left\langle x(0^-), f(t,\omega,x)\right\rangle+ \int_U \left\lvert g(t,\omega,x,\xi)\right\rvert^2\nu_t(\de \xi)\leq K(t) \lambda\left(1+\left\lvert x(\cdot)\right\rvert^2\right) \end{gathered}
\end{equation}
without using a stochastic Gronwall lemma.

In this paper, we study existence and uniqueness of equation
\begin{equation}\label{Chapter1-sima-equ}
\begin{dcases}\de X(t)=f(t,\omega, X)\,\de t+\int_U g(t,\omega, X,\xi)\tilde{M}(\de t,\de \xi),\\X(t)=z(t),\quad t\in[-\tau,0],\end{dcases}
\end{equation}
under weaker conditions than those stated above. In particular, $\tilde M$ will be a rather general martingale measure, and $f$ and $g$ satisfy weaker conditions than   \eqref{Chapter1-plus monotone L2}
and \eqref{Chapter1-plus coercive L2}, namely the right hand sides are replaced by the supremum norm. We will state precise conditions later.

\section{Stochastic Gronwall Lemma}\label{gronsec}
Throughout this section, we will assume that $(\Omega,\mathcal{F},\mathbb{P})$ is a probability space with normal filtration $(\mathcal{F}_t)_{t\ge 0}$.
We will use the following lemma which is essentially \cite[Th\'eor\`eme I \& Corollaire II]{lenglart1977relation} with a slightly better constant $c_p$ and slightly weaker
assumptions.  Note that \cite[Proposition IV.4.7 \& Exercise IV.4.30]{revuz1999continuous} states a similar result for the case of continuous $G$.
\begin{lem}\label{Chapter2-Martingale ineq}
	Let $X$ be a non-negative adapted right-continuous process and let $G$ be a non-negative right-continuous non-decreasing predictable process such that $\mathbb{E}[X(\tau)\vert \mathcal{F}_0]\leq \mathbb{E}[G(\tau)\vert \mathcal{F}_0]\le \infty$ for
	any bounded stopping time $\tau$. Then
	\begin{itemize}
		\item[(i)] $\displaystyle\forall c,d>0$, 
		\[\mathbb{P}\left(\sup_{t\geq 0}X(t)>c\,\Big\vert\mathcal{F}_0\right)\leq \frac{1}{c}\mathbb{E} \left[\sup_{t\geq 0}G(t)\wedge d\,\Big\vert\mathcal{F}_0\right]+\mathbb{P}\left(\sup_{t\geq 0}G(t)\geq d\,\Big\vert\mathcal{F}_0\right).\]
		\item[(ii)] For all  $p\in(0,1)$,
		\[\mathbb{E}\left[\left(\sup_{t\geq 0}X(t)\right)^p\Big\vert \mathcal{F}_0 \right]\leq c_p\mathbb{E}\left[\left(\sup_{t\geq 0}G(t)\right)^p\Big\vert \mathcal{F}_0\right],\]
		where $c_p:=\frac{p^{-p}}{1-p}$.
	\end{itemize}
\end{lem}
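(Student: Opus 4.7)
The plan is to follow Lenglart's classical two-stopping-times strategy conditioned on $\mathcal{F}_0$, and then to derive (ii) from (i) by a layer-cake argument in which the threshold on $G$ is optimised as a function of $p$ to extract the sharper constant $c_p=p^{-p}/(1-p)$.

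For (i), I would introduce
\[S_c:=\inf\{t\geq 0 : X(t)>c\},\qquad T_d:=\inf\{t\geq 0: G(t)\geq d\},\]
so that $\{\sup_t X(t)>c\}=\{S_c<\infty\}$, $\{\sup_t G(t)\geq d\}=\{T_d<\infty\}$, and right-continuity of $X$ yields $X(S_c)\geq c$ on $\{S_c<\infty\}$. The inclusion $\{S_c<\infty\}\subseteq\{S_c<T_d\}\cup\{T_d<\infty\}$ produces the second term of the claimed bound for free, so it remains to control $\mathbb{P}(S_c<T_d\mid\mathcal{F}_0)$. Here the predictability of $G$ enters: since $G$ is predictable, right-continuous, and non-decreasing, $T_d$ is a predictable stopping time, and I would fix an announcing sequence $T_d^n\uparrow T_d$ with $T_d^n<T_d$ on $\{T_d>0\}$; by monotonicity of $G$, $G(T_d^n)<d$ for every $n$. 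Plugging the bounded stopping time $\sigma_{n,k}:=S_c\wedge T_d^n\wedge k$ into the hypothesis gives
\[c\,\mathbb{P}(S_c\le T_d^n\wedge k\mid \mathcal{F}_0) \leq \mathbb{E}[X(\sigma_{n,k})\mid\mathcal{F}_0] \leq \mathbb{E}[G(\sigma_{n,k})\mid\mathcal{F}_0] \leq \mathbb{E}\bigl[\sup_t G(t)\wedge d\,\big|\,\mathcal{F}_0\bigr],\]
since on the left $X(\sigma_{n,k})\geq c$ on the event $\{S_c\le T_d^n\wedge k\}$, while on the right $G(\sigma_{n,k})\le G(T_d^n)<d$ together with $G(\sigma_{n,k})\le \sup_t G(t)$. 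Letting $k\to\infty$ and then $n\to\infty$ and invoking monotone convergence delivers (i).

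For (ii) I would apply the conditional layer-cake identity
\[\mathbb{E}\bigl[(\sup_t X(t))^p\,\big|\,\mathcal{F}_0\bigr]=\int_0^\infty p c^{p-1}\,\mathbb{P}\bigl(\sup_t X(t)>c\,\big|\,\mathcal{F}_0\bigr)\,\de c\]
and substitute (i) with $d=\lambda c$ for a free parameter $\lambda>0$. Writing $\sup_t G(t)\wedge\lambda c=\int_0^{\lambda c}\mathbf{1}_{\{\sup_t G(t)>s\}}\,\de s$ in the first resulting integral and performing the substitution $u=\lambda c$ in the second, a (conditional) Fubini exchange converts both integrals into scalar multiples of $\mathbb{E}[(\sup_t G(t))^p\mid\mathcal{F}_0]$, with combined prefactor $f(\lambda):=\lambda^{1-p}/(1-p)+\lambda^{-p}$. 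One-variable calculus gives $\lambda=p$ as the unique minimiser and $f(p)=p^{-p}/(1-p)$, which is the claimed $c_p$.

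The main obstacle is really the manipulation of $T_d$: predictability of $G$ is essential for the announcing sequence forcing $G(T_d^n)<d$, because otherwise $G$ could jump across the level $d$ exactly at $T_d=S_c$ and the bound $G(\sigma_{n,k})\leq\sup_t G(t)\wedge d$ would collapse. Aside from this, everything reduces to a conditional Markov inequality, Fubini, and elementary optimisation.
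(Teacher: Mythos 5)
Your argument is, in substance, the same Lenglart-type proof the paper gives: part (i) via the predictable hitting time of $G$ at level $d$, its announcing sequence, and the conditional Markov inequality applied at bounded stopping times, and part (ii) via the conditional layer-cake formula with the threshold $d=\lambda c$ optimised at $\lambda=p$ to get $c_p=p^{-p}/(1-p)$. Part (ii) is correct as written (your change of variables reproduces exactly the paper's prefactor $\lambda^{1-p}/(1-p)+\lambda^{-p}$), and the skeleton of part (i) is sound, including the inclusion $\{S_c<\infty\}\subseteq\{S_c<T_d\}\cup\{T_d<\infty\}$ and the exhaustion $\{S_c<T_d\}\subseteq\bigcup_{n,k}\{S_c\le T_d^n\wedge k\}$. (Minor remark: $\{T_d<\infty\}$ and $\{\sup_t G(t)\ge d\}$ need not coincide, e.g.\ if $G$ increases to $d$ without reaching it, but only the inclusion $\{T_d<\infty\}\subseteq\{\sup_t G(t)\ge d\}$ is needed, so this is harmless.)

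There is, however, one step that fails as stated: the pointwise bound $G(\sigma_{n,k})\le G(T_d^n)<d$ is only valid on $\{T_d>0\}=\{G(0)<d\}$. On the $\mathcal{F}_0$-measurable event $\{G(0)\ge d\}$ one has $T_d=0$, the announcing sequence satisfies $T_d^n=0$ there, and $G(T_d^n)=G(0)\ge d$, so your chain $\mathbb{E}[G(\sigma_{n,k})\mid\mathcal{F}_0]\le\mathbb{E}[\sup_t G(t)\wedge d\mid\mathcal{F}_0]$ is not justified on that set; your closing paragraph asserts that predictability ``forces $G(T_d^n)<d$'', which is precisely what is not true when $T_d=0$. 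The gap is easily repaired, and the paper's proof shows how: either observe that on $\{G(0)\ge d\}$ the claimed inequality (i) is trivial because the second term $\mathbb{P}(\sup_t G(t)\ge d\mid\mathcal{F}_0)$ equals $1$ there, so it suffices to argue on $\{G(0)<d\}$; or, equivalently, carry the indicator $\mathbf{1}_{\{G(0)<d\}}$ through the estimate (as the paper does with $\mathbf{1}_{\{G(0)<d\}}X(T\wedge\tilde\tau_d^n\wedge\tau_c)$), noting that this indicator is $\mathcal{F}_0$-measurable so it passes through the conditional expectations. With that one-line fix your proof is complete and coincides with the paper's.
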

For the proof of this lemma, recall that a {\em predictable stopping time} is a map $\tau: \Omega \to [0,\infty]$ for which there exists an increasing sequence $(\tau_n)_{n\in\mathbb{N}}$ of stopping times (called {\em announcing sequence} for $\tau$) with the properties
	\begin{itemize}
		\item[(a)]$\lim_{n\to\infty}\tau_n(\omega)=\tau(\omega),\forall \omega\in \Omega$,
		\item[(b)]$\tau_n(\omega)<\tau(\omega), \forall \omega\in \{\tau>0\}$
	\end{itemize}
        (see \cite[p56]{D72}). For $A \subset [0,\infty)\times \Omega$, let $T_A(\omega):=\inf\{t\ge 0 :(t,\omega)\in A\}$ be the first hitting time of $A$. If $A$ is predictable and
        $\{(t,\omega):T_A(\omega)=t\}\subset A$, then $T_A$ is a predictable stopping time (\cite[p74]{D72}).
\begin{proof}[Proof of Part (i)] This is essentially Theorem I in \cite{lenglart1977relation} with two small modifications: both the assumption and the conclusion
	in \cite{lenglart1977relation} are formulated for expected values rather than conditional expectations and 
	\cite{lenglart1977relation} assumes that $G(0)=0$ almost surely which we do not assume. Both generalizations are easy to see but for the convenience of the reader we provide a proof.
	
	Let $\tilde{\tau}_d:=\inf\{t\geq 0: G(t)\geq d\}$ and $\tau_c:=\inf\{t\geq 0: X(t)\geq c\}$. Since $G$ is a predictable process, $\tilde{\tau}_d$ is the first hitting time of the
        predictable set $A=\{(t,\omega):G(t,\omega)\ge d\}$ and hence is a predictable stopping time since $\{(t,\omega):\tilde{\tau}_d (\omega)=t\}\subset A$. 
        Therefore, there exists a sequence of stopping times $\tilde{\tau}_d^n, n\in \mathbb{N}$  such that $\tilde{\tau}_d^n\uparrow \tilde{\tau}_d$ as $n\uparrow\infty$ and $\tilde{\tau}_d^n<\tilde{\tau}_d$ for all $n\in\mathbb{N}$ on $\{\tilde{\tau}_d>0\}=\{G(0)<d\}$. Then for $T>0$,
	\begin{align*}
	\MoveEqLeft[3]\mathbb{P}\left(\sup_{t\in [0,T]}X(t)>c\,\Big\vert\mathcal{F}_0\right)\\&=\mathbb{P}\left(\sup_{t\in [0,T]}X(t)>c, G(T)<d\,\Big\vert\mathcal{F}_0\right)+\mathbb{P}\left(\sup_{t\in [0,T]}X(t)>c, G(T)\geq d\,\Big\vert\mathcal{F}_0\right)
          \\
   &\leq \mathbb{P}\left(\big\{\mathbf{1}_{\{G(0)<d\}}X(T\wedge  \tau_c )\geq c\big\} \cap \big\{\tilde \tau_d > T\big\}\,\Big\vert\mathcal{F}_0\right)+\mathbb{P}\left(G(T)\geq d\,\vert\mathcal{F}_0\right)\\
   &= \lim_{n \to \infty}\mathbb{P}\left(\big\{\mathbf{1}_{\{G(0)<d\}}X(T\wedge  \tau_c )\geq c\big\} \cap \big\{\tilde \tau^n_d > T\big\}\,\Big\vert\mathcal{F}_0\right)+\mathbb{P}\left(G(T)\geq d\,\vert\mathcal{F}_0\right)\\
   &= \lim_{n \to \infty}\mathbb{P}\left(\big\{\mathbf{1}_{\{G(0)<d\}}X(T\wedge \tilde \tau^n_d\wedge \tau_c )\geq c\big\} \cap \big\{\tilde \tau^n_d > T\big\}\,\Big\vert\mathcal{F}_0\right)+\mathbb{P}\left(G(T)\geq d\,\vert\mathcal{F}_0\right)\\
   &\leq  \lim_{n \to \infty}\mathbb{P}\left(\big\{\mathbf{1}_{\{G(0)<d\}}X(T\wedge \tilde \tau^n_d\wedge \tau_c )\geq c\big\}\,\Big\vert\mathcal{F}_0\right)+\mathbb{P}\left(G(T)\geq d\,\vert\mathcal{F}_0\right)
	\\&
	\leq \frac{1}{c}\lim_{n\to\infty}\mathbb{E} \left[\mathbf{1}_{\{G(0)<d\}}G(T\wedge \tilde{\tau}_d^n\wedge \tau_c )\Big\vert\mathcal{F}_0\right]+\mathbb{P}\left(G(T)\geq d\,\vert\mathcal{F}_0\right)\\&\leq \frac{1}{c}\mathbb{E}[G(T)\wedge d|\mathcal{F}_0]+\mathbb{P}\left(G(T)\geq d\,\vert\mathcal{F}_0\right).
	\end{align*}
%
%
%
Taking the limit $T\to +\infty$ the result follows.
\end{proof}  
\begin{proof}[Proof of Part (ii)]
	Using part (i), we have, for $\lambda>0$,
	{\small
		\begin{align*}
		\MoveEqLeft[0]\mathbb{E}\left[\left(\sup_{t\geq 0}X(t)\right)^p\Big\vert \mathcal{F}_0 \right]= \int_0^{+\infty}\mathbb{P}\left(\sup_{t\geq 0}X(t)>c^{1/p}\Big\vert\mathcal{F}_0\right)\de c\\&\leq \int_{0}^{+\infty}\left\lbrace\frac{1}{c^{1/p}} \mathbb{E} \left[\sup_{t\geq 0}G(t)\wedge \lambda c^{1/p}\Big\vert\mathcal{F}_0\right]+\mathbb{P}\left(\sup_{t\geq 0}G(t)\geq \lambda c^{1/p}\Big\vert\mathcal{F}_0\right)\right\rbrace\de c\\&=\mathbb{E}\left[\int_0^{(\sup_{t\geq 0}G(t)/\lambda)^p}\lambda\de c+\int_{(\sup_{t\geq 0}G(t)/\lambda)^p}^{+\infty}\frac{\sup_{t\geq 0}G(t)}{c^{1/p}}\de c\Big\vert\mathcal{F}_0\right]+\lambda^{-p}\mathbb{E}\left[\left(\sup_{t\geq 0}G(t)\right)^p\Big\vert\mathcal{F}_0\right]\\&=\left(\frac{1}{1-p}\lambda^{1-p}+\lambda^{-p}\right) \mathbb{E}\left[\left(\sup_{t\geq 0}G(t)\right)^p\Big\vert \mathcal{F}_0\right]
		\end{align*}}
	The minimal value of  $(1-p)^{-1}\lambda^{1-p}+\lambda^{-p}$  is equal to $c_p$ for the minimizer $\lambda=p$.
\end{proof}
\begin{thm}[Stochastic Gronwall lemma]\label{Chapter2-GronwallThm}
	Let $X(t),\, t\geq 0$ be an $(\mathcal{F}_t)_{t\ge 0}$-adapted non-negative right-continuous process.
	Assume that $A:[0,\infty)\to[0,\infty) $ is a deterministic non-decreasing c\`adl\`ag function with $A(0)=0$ and let $H(t),\,t\geq 0$
	be a non-decreasing and c\`adl\`ag adapted process starting from $H(0)\geq 0$. Further, let $M(t),\,t\geq 0$ be  an
	$(\mathcal{F}_t)_{t\ge 0}$- local martingale with $M(0)=0$ and c\`adl\`ag paths.  Assume that for all $t\geq 0$,
	
	\begin{equation}\label{Chapter2-asp-ineq}
	X(t)\leq \int_0^t X^*(u^-)\,\de A(u)+M(t)+H(t),
	\end{equation}
	
	where $X^*(u):=\sup_{r\in[0,u]}X(r)$. Then the following estimates hold for $p\in (0,1)$ and $T>0$.
	\begin{enumerate}[label=(\alph*)]

		\item\label{Chapter2-a}If $\mathbb{E} \big(H(T)^p\big)<\infty$ and $H$ is predictable, then
		\begin{equation}\label{Chapter2-ineq-p-H:predic}
		\mathbb{E}\left[\left(X^*(T)\right)^p\Big\vert\mathcal{F}_0\right]\leq \frac{c_p}{p}\mathbb{E}\left[(H(T))^p\big\vert\mathcal{F}_0\right] \exp \left\lbrace c_p^{1/p}A(T)\right\rbrace.
		\end{equation}
		\item\label{Chapter2-b}If $\mathbb{E} \big(H(T)^p\big)<\infty$ and $M$ has no negative jumps, then
		\begin{equation}\label{Chapter2-ineq-p-M:cont}
		\mathbb{E}\left[\left(X^*(T)\right)^p\Big\vert\mathcal{F}_0\right]\leq \frac{c_p+1}{p}\mathbb{E}\left[(H(T))^p\big\vert\mathcal{F}_0\right] \exp \left\lbrace (c_p+1)^{1/p}A(T)\right\rbrace.
		\end{equation}
		\item\label{Chapter2-c}If $\mathbb{E} H(T)<\infty$, then  
		\begin{equation}\label{Chapter2-ineq-p}
		\displaystyle{\mathbb{E}\left[\left(X^*(T)\right)^p\Big\vert\mathcal{F}_0\right]\leq \frac{c_p}{p}\left(\mathbb{E}\left[ H(T)\big\vert\mathcal{F}_0\right]\right)^p \exp \left\lbrace c_p^{1/p} A(T)\right\rbrace.}
		\end{equation}
	\end{enumerate} 
	Here $c_p=\frac{p^{-p}}{1-p}$.
\end{thm}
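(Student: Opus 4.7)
The plan is to recast the hypothesis \eqref{Chapter2-asp-ineq} as $X(t) \le G(t) + M(t)$, where $G(t) := \int_0^t X^*(u^-)\,\de A(u) + H(t)$ is non-decreasing and c\`adl\`ag, and then to apply Lemma~\ref{Chapter2-Martingale ineq}. In case \ref{Chapter2-a}, $G$ is predictable: $X^*(u^-)$ is left-continuous adapted and hence predictable; $A$ is deterministic so the Lebesgue--Stieltjes integral is predictable; and $H$ is predictable by hypothesis. A standard localization of $M$, together with a truncation at $\sigma_n := \inf\{t : X^*(t) \ge n\}$, reduces the problem to the case where $M$ is a true martingale and $X^*$ is bounded, the general statement following at the end via Fatou. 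Optional stopping then gives $\mathbb{E}[M(\tau) \vert \mathcal{F}_0] = 0$ for every bounded stopping time $\tau$, hence $\mathbb{E}[X(\tau)\vert \mathcal{F}_0] \le \mathbb{E}[G(\tau)\vert \mathcal{F}_0]$, which is precisely the hypothesis of Lemma~\ref{Chapter2-Martingale ineq}(ii). That lemma produces the implicit bound
\[
\mathbb{E}[(X^*(T))^p \vert \mathcal{F}_0] \le c_p\,\mathbb{E}[G(T)^p \vert \mathcal{F}_0].
\]

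The bulk of the work is then to unroll this self-referential inequality into the explicit exponential estimate. Using subadditivity $(a+b)^p \le a^p + b^p$ for $p \in (0,1)$ together with the crude pointwise bound $\int_0^T X^*(u^-)\,\de A(u) \le X^*(T)\,A(T)$ immediately gives $\phi(T)(1 - c_p A(T)^p) \le c_p\,\mathbb{E}[H(T)^p\vert\mathcal{F}_0]$ for $\phi(t) := \mathbb{E}[(X^*(t))^p\vert\mathcal{F}_0]$, which is useful only while $c_p A(T)^p < 1$. To reach the claimed global bound \eqref{Chapter2-ineq-p-H:predic} with the sharp exponent $c_p^{1/p}$, my plan is to partition $[0,T]$ along $A$ into sub-intervals of small $A$-length $\delta$, apply the Lenglart/subadditivity estimate on each piece (treating $X^*(T_{k-1})$ as a new initial value that gets absorbed into the $H$-term of the next step), and then iterate via a discrete Gronwall argument. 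A careful tuning of $\delta$ against the multiplicative factor gained on each of the roughly $A(T)/\delta$ sub-intervals will produce the exponential factor $\exp(c_p^{1/p} A(T))$; obtaining the sharp numerical constant $c_p^{1/p}$ (rather than a crude multiple of it) is the main technical obstacle.

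Part \ref{Chapter2-c} I would deduce from part \ref{Chapter2-a} by a martingale replacement: since $H$ is non-decreasing and $\mathbb{E}[H(T)] < \infty$, $H(t) \le N(t) := \mathbb{E}[H(T)\vert\mathcal{F}_t]$, and writing $N(t) = N(0) + (N(t) - N(0))$ in \eqref{Chapter2-asp-ineq} absorbs the bracketed martingale into a new local martingale $\tilde M := M + N - N(0)$, while $H$ is effectively replaced by the constant $\mathcal{F}_0$-measurable value $N(0) = \mathbb{E}[H(T)\vert\mathcal{F}_0]$, which is trivially predictable; applying part \ref{Chapter2-a} to this modified inequality yields \eqref{Chapter2-ineq-p}. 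For part \ref{Chapter2-b}, where $H$ need not be predictable, the plan is to replace $H$ in the dominator by its predictable left-continuous version $H(\cdot-)$ and to exploit the no-negative-jumps condition on $M$ in a modified form of the Lenglart argument; the uncontrolled predictable jumps of $H$ then contribute an additive $+1$ to the Lenglart constant, giving $c_p + 1$ in place of $c_p$ in the final estimate.
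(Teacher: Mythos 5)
Your setup (localization, optional stopping, and the application of Lemma~\ref{Chapter2-Martingale ineq}(ii) to the predictable dominator $G(t)=\int_0^t X^*(u^-)\,\de A(u)+H(t)$) matches the paper, and your reduction of part~\ref{Chapter2-c} to part~\ref{Chapter2-a} via $\tilde M=M+\mathbb{E}[H(T)\vert\mathcal{F}_t]-\mathbb{E}[H(T)\vert\mathcal{F}_0]$ is exactly the paper's argument. The genuine gap is the step you yourself flag as ``the main technical obstacle'': turning the implicit bound $\mathbb{E}[(X^*(T))^p\vert\mathcal{F}_0]\le c_p\,\mathbb{E}[G(T)^p\vert\mathcal{F}_0]$ into the explicit estimate \eqref{Chapter2-ineq-p-H:predic}. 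That step is the heart of the theorem and is not supplied. Moreover, the partition-and-iterate scheme you propose cannot deliver the stated constants: every application of Lenglart on a sub-interval costs a multiplicative factor at least $c_p>1$ even before the $(1-c_p\delta^p)^{-1}$ absorption, so over $N\approx A(T)/\delta$ sub-intervals you accumulate at best a factor of order $\bigl(c_p/(1-c_p\delta^p)\bigr)^{A(T)/\delta}$; optimizing over $\delta$ leaves an exponential rate strictly larger than $c_p^{1/p}$ (it carries an extra $\log c_p$-type contribution), and the scheme blows up as $\delta\to 0$. The paper avoids iterating Lenglart altogether: after a single application and subadditivity of $x\mapsto x^p$, it writes $\bigl(\int_0^t X^*(s^-\wedge\tau_n)\,\de A(s)\bigr)^p\le\bigl(\int_0^t (X^*(s^-\wedge\tau_n))^p\,\de A(s)\bigr)^p\,(X^*(t^-\wedge\tau_n))^{p(1-p)}$, applies Young's inequality with a free parameter $\lambda$ to get
\begin{equation*}
\phi(t)\le c_p p\lambda^{1-p}\int_0^t\phi(s^-)\,\de A(s)+c_p(1-p)\lambda^{-p}\phi(t)+c_p\,\mathbb{E}[H(t)^p\vert\mathcal{F}_0],\qquad \phi(t):=\mathbb{E}\bigl[(X^*(t\wedge\tau_n))^p\big\vert\mathcal{F}_0\bigr],
\end{equation*}
absorbs the middle term (using $\phi(T)<\infty$ after localization), applies the \emph{deterministic} Gronwall lemma, and finally optimizes $\lambda=c_p^{1/p}$, which is what produces $\frac{c_p}{p}$ and $c_p^{1/p}$. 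Without this (or an equivalent) linearization device, part~\ref{Chapter2-a} is unproven.

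Part~\ref{Chapter2-b} as you sketch it also does not work: replacing $H$ by $H(\cdot^-)$ destroys the domination $\mathbb{E}[X(\tau)\vert\mathcal{F}_0]\le\mathbb{E}[G(\tau)\vert\mathcal{F}_0]$ (you would be discarding the jump $\Delta H(\tau)$, which is uncontrolled), and the paper's counterexample shows that Lenglart's part (ii) genuinely fails for non-predictable dominators, so the loss cannot be repaired by just adding $1$ to the constant. The actual mechanism in the paper uses the no-negative-jumps hypothesis on $M$, not on $H$: since $X\ge 0$, \eqref{Chapter2-asp-ineq} gives $\tilde G_n(t):=-\inf_{s\le t}M(s\wedge\tau_n)\le\int_0^t X^*((s\wedge\tau_n)^-)\,\de A(s)+H(t)$, and $\tilde G_n$ is \emph{continuous} (hence predictable) precisely because $M$ has no negative jumps; Lenglart applied to $M(\cdot\wedge\tau_n)$ with dominator $\tilde G_n$ bounds $\mathbb{E}[(\sup_s M(s\wedge\tau_n))^p\vert\mathcal{F}_0]$ by $c_p$ times the $p$-th power of the right-hand side, and adding the raw pathwise inequality for $X$ contributes the extra $+1$, after which the argument concludes as in part~\ref{Chapter2-a}.
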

\begin{proof} Note that the usual Gronwall lemma and \eqref{Chapter2-asp-ineq} imply that $X$ is almost surely locally bounded since this holds true for $M$ and $H$ (observe that we did not
  assume that $X$ has left limits).
  
	\textbf{Part (a)}
	Let $\sigma_n$, $n\in\mathbb{N}$ be a localizing sequence of stopping times for the local martingale $M$ and define $\tau_n:=\inf\left\lbrace t\geq 0: X(t)>n\right\rbrace\wedge\sigma_n$. Then it holds that
	\begin{equation}\label{Chapter2-stp-asp}
	X(t\wedge\tau_n)\leq \int_0^t X^*((s\wedge \tau_n)^-)\,\de A(s)+M(t\wedge\tau_n)+H(t)\leq \int_0^t X^*(s^-\wedge \tau_n)\,\de A(s)+M(t\wedge\tau_n)+H(t),
	\end{equation}
	$X$ is a nonnegative right-continuous process and 
	\[G_n(t):=\int_0^t X^*(s^-\wedge\tau_n)\,\de A(s)+H(t)\]
	is non-decreasing and  predictable with the property that for every finite stopping time $\tau$,  we have $\mathbb{E}\left[ X(\tau \wedge \tau_n)\vert \mathcal{F}_0\right]\leq \mathbb{E}\left[ G_n(\tau)\vert \mathcal{F}_0\right]\le \infty$. Therefore, using Lemma \ref{Chapter2-Martingale ineq} and Young's inequality, we have, for $\lambda>0$ and $t \ge 0$
	\begin{align*}
	\MoveEqLeft[0.5]\mathbb{E}\left[ (X^*(t\wedge\tau_n))^p\vert \mathcal{F}_0\right]\\&\leq c_p\mathbb{E}\left[\left(\int_0^tX^*(s^-\wedge\tau_n)\,\de A(s) \right)^p+\left(H(t)\right)^p\Big\vert \mathcal{F}_0\right]\\&\leq c_p\mathbb{E}\left[\left(\int_0^t(X^*(s^-\wedge\tau_n))^p\,\de A(s) \right)^p(X^*(t^-\wedge\tau_n))^{p(1-p)}+\left(H(t)\right)^p\Big\vert \mathcal{F}_0\right]
	\\&\leq c_p\mathbb{E}\left[p\lambda^{1-p}\int_0^t(X^*(s^-\wedge\tau_n))^p\,\de A(s)+(1-p)\lambda^{-p}(X^*(t\wedge\tau_n))^p+\left(H(t)\right)^p\Big\vert \mathcal{F}_0\right].
	\end{align*}
	It follows from the first inequality in \eqref{Chapter2-stp-asp} that  $\mathbb{E}\left[ (X^*(T\wedge\tau_n))^p\vert \mathcal{F}_0\right]<\infty$ almost surely. Hence, applying
        the usual Gronwall's lemma to $f(t):=\mathbb{E}\big(X^*(t \wedge \tau_n)^p\big|\mathcal{F}_0\big)$, we get for $\lambda> c_p^{1/p}(1-p)^{1/p}$, 
	\[\mathbb{E}\left[ (X^*(T\wedge\tau_n))^p\vert \mathcal{F}_0\right]\leq \exp\left(\frac{c_pp\lambda^{1-p}A(T)}{1-c_p(1-p)\lambda^{-p}}\right)\frac{c_p\mathbb{E}\left[(H(T))^p\vert \mathcal{F}_0\right]}{1-c_p(1-p)\lambda^{-p}},\]
        so applying Fatou's lemma, we get 
	\begin{align*}
	\mathbb{E}\left[ (X^*(T))^p\vert \mathcal{F}_0\right]&\leq\liminf_{n\to+\infty}\mathbb{E}\left[ (X^*(T\wedge\tau_n))^p\vert \mathcal{F}_0\right]\\&\leq \exp\left(\frac{c_pp\lambda^{1-p}A(T)}{1-c_p(1-p)\lambda^{-p}}\right)\frac{c_p\mathbb{E}\left[(H(T))^p\vert \mathcal{F}_0\right]}{1-c_p(1-p)\lambda^{-p}}
	\end{align*}
	which yields inequality \eqref{Chapter2-ineq-p-H:predic} by taking $\lambda=c_p^{1/p}$. 
	\paragraph*{Part (b)} Let $\sigma_n$, $n\in\mathbb{N}$ be a localizing sequence of stopping times for the continuous local martingale $M$ and define $\tau_n:=\inf\left\lbrace t\geq 0: X(t)>n\right\rbrace\wedge\sigma_n$. Then it holds that
	\begin{equation}\label{Chapter2-stp-asp-2}
	\tilde{G}_n(t):=-\inf_{s\in[0,t]} M(s\wedge\tau_n)\leq \int_0^t X^*((s\wedge \tau_n)^-)\,\de A(s)+H(t),
	\end{equation}
	$M(t\wedge\tau_n)+\tilde G_n(t)$, $t\geq 0$ is a nonnegative continuous process and $\tilde{G}_n$
	is non-decreasing and  predictable with the property that for every bounded stopping time $\tau$, $\mathbb{E}\left[ M(\tau \wedge \tau_n)\vert \mathcal{F}_0\right]\leq \mathbb{E}\left[ \tilde{G}_n(\tau)\vert \mathcal{F}_0\right]$. Therefore using Lemma \ref{Chapter2-Martingale ineq}, we have
	\begin{align}
	\mathbb{E}\left[\left(\sup_{s\in [0,t]}M(s\wedge\tau_n)\right)^p\Big\vert \mathcal{F}_0\right]\leq c_p\mathbb{E}\left[\left(\int_0^t X^*((s\wedge\tau_n)^-)\,\de A(s)+H(t)\right)^p\Big\vert \mathcal{F}_0\right].
	\end{align}
	Using inequality \eqref{Chapter2-stp-asp}, we get
	\begin{align*}
	\mathbb{E}\left[ (X^*(t\wedge\tau_n))^p\vert \mathcal{F}_0\right]&\leq  (c_p+1)\mathbb{E}\left[\left(\int_0^t X^*((s\wedge\tau_n)^-)\,\de A(s)+H(t)\right)^p\Big\vert \mathcal{F}_0\right].
	\end{align*}
	The rest of the proof is similar to the proof of part \ref{Chapter2-a}.
	\paragraph*{Part (c)} Now we prove the inequality for general $H$. Defining the new local martingale 
	\[\tilde{M}(t):=M(t)+\mathbb{E}\left[H(T)\big\vert \mathcal{F}_t\right]-\mathbb{E}\left[ H(T)\big\vert\mathcal{F}_0\right]\]
	(where we take a c\`adl\`ag modification of $t \mapsto \mathbb{E}\left[H(T)\big\vert \mathcal{F}_t\right]$) and the
	predictable process $\tilde{H}(t):=\mathbb{E}\left[ H(T)\big\vert\mathcal{F}_0\right]$, 
	we have 
	\[X(t)\leq \int_0^t X^*(u^-)\,\de A(u)+\tilde{M}(t)+\tilde{H}(t),\]
	since $\mathbb{E}\left[H(T)\big\vert \mathcal{F}_t\right]\geq H(t)$. Thus the result follows from part \ref{Chapter2-a}.
\end{proof}
\begin{remark}
	Lemma 5.4 in \cite{scheutzow2013stochastic} states a stochastic Gronwall inequality in the case of continuous $M, X, H$
	which is less general than part \ref{Chapter2-b} in Theorem \ref{Chapter2-GronwallThm}. In addition, the proof of
	\cite[Lemma 5.4]{scheutzow2013stochastic}
	contains a gap since the processes $X_i$ defined there can be negative outside of $\Omega_i$.  
\end{remark}
\begin{counterexample}
	Under the assumptions of Theorem \ref{Chapter2-GronwallThm}, for $p,\alpha\in (0,1)$, the inequality 
	\[\mathbb{E}\left[\left(X^*(T)\right)^p\Big\vert\mathcal{F}_0\right]\leq c_{1,p,\alpha}\left(\mathbb{E}\left[(H(T))^\alpha\big\vert\mathcal{F}_0\right]\right)^{p/\alpha} \exp \left\lbrace c_{2,p,\alpha}A(T)\right\rbrace\]  
	is generally not true with finite constants $c_{1,p,\alpha}$ and $c_{2,p,\alpha}$ for c\`adl\`ag martingales without assuming predictability of $H$.
	To see this, let $q\in (0,1)$ and let $S_{q,\alpha}$ be a random variable such that
	\[S_{q,\alpha}=\begin{dcases}
	(1-q)^{1-\frac{1}{\alpha}}q^{-1}, &\text{ with probability } q;\\-(1-q)^{-\frac{1}{\alpha}}, &\text{ with probability } 1-q.
	\end{dcases}\] 
	Consider  $M_{q,\alpha}(t):=\mathbf{1}_{[1,\infty)}(t)S_{q,\alpha}$,  $H_{q,\alpha}(t):=\mathbf{1}_{[1,\infty)}(t)(S_{q,\alpha})_-$ (with $x_-:=(-x)\vee 0$, $x \in \mathbb{R}$)
	and $Z_{q,\alpha}(t):=M_{q,\alpha}(t)+H_{q,\alpha}(t)$. Then there is no constant $c_{p,\alpha}$ depending only on $p,\alpha \in (0,1)$ such that the inequality
	\[\mathbb{E}\left[(Z^*_{q,\alpha}(1))^p\right]\leq c_{p,\alpha}\left(\mathbb{E}[(H_{q,\alpha}(1))^\alpha]\right)^{p/\alpha}\]
	holds for all $q\in (0,1)$ since 
	\[\mathbb{E}\left[(Z^*_{q,\alpha}(1))^p\right]=\mathbb{E}\left[(S_{q,\alpha})_+^p\right]=(1-q)^{p(1-\frac{1}{\alpha})}q^{1-p}\to \infty,\quad  \text{ as } q\to 1,\]
	while, on the other hand,
	\[\mathbb{E}\left[(H_{q,\alpha}(1))^\alpha\right]=\mathbb{E}((S_{q,\alpha})_-^\alpha)=1.\]
\end{counterexample}

\section{Well-posedness of Path-dependent SDEs} 
First, we recall the definition of an orthogonal martingale-valued measure according to \cite{el1990martingale, walsh1986introduction}. Let  $(U,\mathcal{U})$ be a
Lusin space, i.e.~a measurable space homeomorphic to a Borel subset of $\R$. Consider an increasing sequence $U_n, n\in \mathbb{N}$ in $\mathcal{U}$
such that $U=\cup_{n\in \mathbb{N}}U_n$ and define $\mathcal{U}_n:=\mathcal{U}\vert_{U_n}$ and $ \mathcal{A}:=\cup_{n\in\mathbb{N}}\mathcal{U}_n$.
A {\em martingale measure} is a set function $\tilde{M}:\mathbb{R}^+\times \mathcal{A}\times \Omega\to \mathbb{R}$ which satisfies the following
(c.f. \cite{applebaum2006martingale, el1990martingale, walsh1986introduction}):
\begin{itemize}
\item[(a)] $\tilde{M}(0,A)=\tilde{M}(t,\emptyset)=0$ (a.s.), for all $A\in \mathcal{A}, t\geq 0$;
\item[(b)] $\tilde{M}(t,A\cup B)=\tilde{M}(t,A)+\tilde{M}(t,B)$ (a.s.), for all $t\geq 0$ and all disjoint $A,B\in \mathcal{A}$;
\item[(c)] For each non-increasing sequence $(A_i)$ of $\mathcal{U}_n$ converging to $\emptyset$, and for each $t \ge 0$,  $\mathbb{E}\left[\left\lvert \tilde{M}(t,A_i)\right\rvert^2\right]$
  tends to zero; 
\item[(d)] $\sup\left\lbrace \mathbb{E}\left\lvert\tilde{M}(t,A)\right\rvert^2, A\in \mathcal{U}_n\right\rbrace<\infty$ for all $n\in \mathbb{N}$ and $t\ge 0$;
\item[(e)] $(\tilde{M}(t,A))_{t\geq 0}$ is a c\`adl\`ag martingale for all $A\in\mathcal{A}$.

\end{itemize}
 Note that $\tilde{M}$ is countably additive on $\mathcal{U}_n$ as an $L^2$-valued set function.  In Walsh's terminology \cite{walsh1986introduction}, $\tilde{M}$ is called ``$\sigma$-finite $L^2$-valued martingale measure''.
 
 A martingale measure $\tilde{M}$ is called {\em orthogonal} if for all $A, B\in \mathcal{A}$ with $A\cap B=\emptyset$, $(\tilde{M}_t(A)\cdot \tilde{M}_t(B))_{t\geq 0}$ is a martingale.
 Note that in this case property (d) holds automatically.

 	Throughout the paper,  $\nu:\mathbb{R}^+\times \mathcal{U}\to \mathbb{R}\cup\{+\infty\}$ denotes a deterministic function such that for each $t\geq 0$, $\nu(t,\cdot)$ is a $\sigma$-finite  measure and the map $t\mapsto \nu(t,A)$ is measurable and locally integrable for each $A\in \mathcal{A}$. We assume that $\tilde{M}$ is an orthogonal martingale measure with intensity $(\nu_t)_{t\geq 0}$, i.e. $\left\langle \tilde{M}_\cdot(A), \tilde{M}_\cdot(B)\right\rangle_t=\int_0^t \nu_r(A\cap B)\,\de r$, which means $\big(\tilde{M}(t,A)\tilde{M}(t,B)-\int_0^t \nu_r(A\cap B)\,\de r\big)_{t\geq 0} $ is a martingale for all $A,B\in\mathcal{A}$. 

        The stochastic integral with respect to $\tilde{M}$ can be constructed in the same way as the construction of It\^o's integral (see \cite{walsh1986introduction}).
In particular, the stochastic integral $h\cdot \tilde M$ is defined for functions $h$ in
 \begin{align*}
 L^2_{\nu}:=\Big\lbrace h:(\mathbb{R}^+\times \Omega \times U, \mathcal{P}\otimes \mathcal{U})\to& (\mathbb{R}^{d},\mathcal{B}(\mathbb{R}^{d}));\\& \mathbb{E}\int_0^T\int_U  \left\lvert h(s,\omega, \xi)\right\rvert^2\nu_s(\de \xi)\de s<\infty, \forall T>0 \Big\rbrace,
 \end{align*}
 where $\mathcal{P}$ denotes the {\em predictable} $\sigma$-field on $\mathbb{R}^+\times \Omega$. Further, $h\cdot \tilde M$ is itself an orthogonal martingale measure and we have
 \begin{equation} \label{Chapter1-covariation}
 \left\langle h\cdot \tilde{M}_\cdot(A),h\cdot \tilde{M}_\cdot(B)\right\rangle_t=\int_0^t\left\lvert h(s,\omega,\xi)\right\rvert^2 \nu_s(A\cap B)\,\de s. 
 \end{equation}
Applying the usual localization procedure, the class of admissible integrands can be further extended to the class of measurable functions $h:(\mathbb{R}^+\times \Omega \times U, \mathcal{P}\otimes \mathcal{U})\to (\mathbb{R}^{d},\mathcal{B}(\mathbb{R}^{d}))$ for which $\int_0^T\int_U  \left\lvert h(s,\omega, \xi)\right\rvert^2\nu_s(\de \xi)\de s<\infty, \forall T>0$, almost surely.
In this case, \eqref{Chapter1-covariation} still holds.

\medskip

 Now we are ready to provide a general existence and uniqueness result on strong solutions of functional  stochastic differential equations with monotone coefficients driven by (orthogonal)
 martingale noise as above. 
\medskip

Consider the following path-dependent stochastic differential equation 
\begin{equation}
\label{Chapter2-equ1-}
\begin{dcases}
\de X_t 
= f(t,\omega,X)\,\de t
+ \int_U g(t,\omega, X,\xi)\tilde{M}(\de t,\de \xi),
\\ 
X_t  = z_t, \quad t\in [-\tau,0],
\end{dcases} 
\end{equation}  
where $\tau>0$ and the random initial condition $z$ belongs to
$\text{C\`adl\`ag}\,([-\tau,0];\mathbb{R}^d)$ and is $\mathcal{F}_0$ measurable. All spaces of c\`adl\`ag functions are endowed with the supremum norm. The coefficient
\[ \begin{aligned}
f & :\left( [0, \infty )\times \Omega\times \text{C\`adl\`ag}\,([-\tau,\infty);\mathbb{R}^d),\mathcal{BF}\otimes\mathcal{B}\left(\text{C\`adl\`ag}\,([-\tau,\infty);\mathbb{R}^d)\right)\right)\\&\quad \to \left(\mathbb{R}^d,\mathcal{B}\left(\mathbb{R}^d\right)\right)
\end{aligned}\]
is progressively measurable and 
\[\begin{aligned} g  &: \left([0, \infty )\times \Omega\times \text{C\`adl\`ag}\,([-\tau,\infty);\mathbb{R}^d)\times U,\mathcal{P}\otimes\mathcal{B}\left(\text{C\`adl\`ag}\,([-\tau,\infty);\mathbb{R}^d)\right)\otimes \mathcal{U}\right)\\&\quad\to\left(\mathbb{R}^{d},\mathcal{B}\left(\mathbb{R}^{d}\right)\right) \end{aligned}\]
is predictable. Here $\mathcal{BF}$ is the $\sigma$-field of progressively measurable sets on $[0,\infty)\times \Omega$. 
For every $t\in[0,\infty)$ and $\omega\in\Omega$, $f(t,\omega,x)$ depends only on the path of $x$ on the interval $[-\tau,t]$ and for every $t,\omega, \xi$, $g(t,\omega,x,\xi)$ depends only on the path of $x$ on the interval $[-\tau,t)$.


The following monotonicity and growth conditions are assumed:

\begin{hyp}
	\label{Chapter2-hyp1-} 
	There exist non-negative functions $t\mapsto K(t) $, $L_R(t)$ and $\tilde{K}_R(t)$,  for all $R > 0$ in $L^1_ {\mathrm {loc}} ( [0,\infty), \de t)$ such that for all
        $x,y\in \text{C\`adl\`ag}\,([-\tau,\infty),\mathbb{R}^d)$ and all $t\geq 0$,
	\begin{enumerate}[label=(C\theenumi)]
		\item \label{Chapter2-C1} for $\sup_{s\in[-\tau,t]}\left\lvert x(s)\right\rvert,\sup_{s\in[-\tau,t]}\left\lvert y(s)\right\rvert\le R$,
		\begin{align*}
		& 2\left\langle  x(t^-)-y(t^-),f(t,\omega,x)-f(t,\omega,y)\right\rangle 
		+ \int_{U}\left\lvert g(t,\omega,x,\xi)-g(t,\omega,y,
		\xi)\right\rvert^2\nu_t ( \de \xi) 
		\\&\le L_R(t)\sup_{s\in[-\tau,t]}\left\lvert x(s)-y(s)\right\rvert^2;
		\end{align*}
		\item \label{Chapter2-C2}
		$ 
		2\left\langle x(t^-),f(t,\omega,x)\right\rangle  
		+ \int_{U} \left\lvert g(t,\omega,x,\xi)\right\rvert^2\nu_t(\de \xi) 
		\le K(t)\left(1+\sup_{s\in[-\tau,t]}\left\lvert x(s)\right\rvert^2\right); $
		\item \label{Chapter2-C3} 
		$x\mapsto f(t,\omega,x)$  as a function from $\text{C\`adl\`ag}\,([-\tau,\infty);
		\mathbb{R}^d)$ to $\mathbb{R}^d$ is continuous;
		\item \label{Chapter2-C4}  for $\sup_{s\in[-\tau,t]}\left\lvert x(s)\right\rvert\leq R$,
		\[\left\lvert  
		f(t,\omega,x)\right\rvert  
		+\int_U \left\lvert g(t,\omega, 
		x,\xi)\right\rvert^2\nu_t(\de \xi)
		\le \tilde{K}_R(t);\]
		\item \label{Chapter2-C5}
		$\mathbb{E}\sup_{s\in[-\tau,0]}\left\lvert z(s)\right\rvert^2<\infty.$
	\end{enumerate} 
\end{hyp} 

We are going to prove existence and uniqueness of a strong solution using the Euler method. To 
this end let us introduce for $n\in\mathbb{N}$ and $k\in\mathbb{N}_0$ the Euler approximation  
\begin{equation}
\label{Chapter2-df Xn-}
\begin{split}
X^{(n)}_t 
& = X^{(n)}_{\frac{k}{n}}+\int_{\frac{k}{n}}^t f\left(s,\omega,  
X^{(n)}_{\cdot\wedge \frac{k}{n}}\right)\,\de s 
\\&\quad +\int_{(\frac{k}{n},t]\times U} g\left(s,\omega,   X^{(n)}_{\cdot\wedge \frac{k}{n}},
\xi\right)\tilde{M}(\de s,\de \xi), \quad t\in \left]\frac{k}{n},\frac{k+1}{n}\right],
\end{split}
\end{equation}
to the solution of \eqref{Chapter2-equ1-}. Let $\kappa(n,t):=\frac{k}{n}$ for 
$t\in \left]\frac{k}{n},\frac{k+1}{n}\right]$, $k\geq 0$ and  $\kappa(n,t):=t$ for $t\in [-\tau,0]$. The process $X^{(n)}$ can be 
constructed inductively as follows: $X^{(n)}_t := z_t$ for $t\in [-\tau,0]$, and given 
$X^{(n)}_t$ is defined for $t\le \frac{k}{n}$ we can extend $X^{(n)}_t$ for 
$t\in \left]\frac{k}{n},\frac{k+1}{n}\right]$ using \eqref{Chapter2-df Xn-}. Note that 
$X^{(n)}$, $t\ge -\tau$ is c\`adl\`ag, adapted, and that the  
stochastic integrals are well-defined.  

\begin{thm}
	\label{Chapter2-thmA2}
	Under Hypothesis \ref{Chapter2-hyp1-}, equation \eqref{Chapter2-equ1-} has a unique strong solution $X$, and 
	$X^{(n)}$ converges to $X$ locally uniformly in probability, i.e. for all $T>0$,  
	\[
	\lim_{n\to\infty}\mathbb{P}\left\lbrace \sup_{t\in[0,T]}\left\lvert X^{(n)}_t-X_t\right\rvert 
	> \varepsilon\right\rbrace=0\qquad\forall\,\varepsilon > 0\, . 
	\]
\end{thm}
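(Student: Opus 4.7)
The strategy is to apply Theorem \ref{Chapter2-GronwallThm}(c) twice: once to produce a Cauchy estimate for the Euler approximants $X^{(n)}$, and once to prove uniqueness. Everything is localised at the stopping times
\[
\tau^{(n)}_R := \inf\{t\ge 0 : |X^{(n)}_t| \ge R\}, \qquad \sigma_R := \tau^{(n)}_R \wedge \tau^{(m)}_R.
\]

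\textbf{A priori bound.} I would first apply It\^o's formula to $|X^{(n)}_{t\wedge\tau^{(n)}_R}|^2$. Because $X^{(n)}_{\cdot\wedge\kappa(n,s)}$ is piecewise constant in its path argument, combining (C2) with (C4) rewrites the drift and quadratic-variation terms as $\int_0^t K(s)\bigl(1+\sup_{u\le s}|X^{(n)}_{u}|^2\bigr)\de s + r_n(t)$, where the grid-error $r_n$ is controlled uniformly in $n$ via (C4). This puts the stopped process into the form \eqref{Chapter2-asp-ineq} with $A(t)=\int_0^tK(s)\,\de s$, a local martingale $M^{(n)}$ coming from the stochastic integral, and a non-decreasing $H$ whose conditional expectation is a.s.~finite by (C5). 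Theorem \ref{Chapter2-GronwallThm}(c) with $p\in(0,1)$ then gives a bound on $\mathbb E\bigl[\bigl(\sup_{t\le T\wedge\tau^{(n)}_R}|X^{(n)}_t|\bigr)^{2p}\bigr]$ uniform in $n$ and $R$, and Lenglart's inequality (Lemma \ref{Chapter2-Martingale ineq}(i)) yields $\sup_n\mathbb P\{\sup_{t\le T}|X^{(n)}_t|>R\}\to 0$ as $R\to\infty$.

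\textbf{Cauchy estimate and limit identification.} For $D^{n,m}_t := X^{(n)}_t - X^{(m)}_t$, It\^o's formula applied to $|D^{n,m}_{t\wedge\sigma_R}|^2$, combined with the decomposition
\[
f(s,X^{(n)}_{\cdot\wedge\kappa(n,s)})-f(s,X^{(m)}_{\cdot\wedge\kappa(m,s)}) = \alpha^{n}_s + \bigl[f(s,X^{(n)})-f(s,X^{(m)})\bigr] + \beta^{m}_s,
\]
with $\alpha^{n}_s := f(s,X^{(n)}_{\cdot\wedge\kappa(n,s)})-f(s,X^{(n)})$, $\beta^{m}_s := f(s,X^{(m)})-f(s,X^{(m)}_{\cdot\wedge\kappa(m,s)})$ and the analogous decomposition for $g$, together with the monotonicity (C1) applied to the middle differences, yields
\[
|D^{n,m}_{t\wedge\sigma_R}|^2 \le \int_0^t L_R(s)\sup_{u\le s}|D^{n,m}_{u\wedge\sigma_R}|^2\,\de s + M^{n,m}(t) + H^{n,m}(t).
\]
The process $H^{n,m}$ collects the $|\alpha^n|,|\beta^m|$ contributions and their $g$-counterparts integrated against $\nu_s$; by (C3) each such integrand tends to $0$ a.e., while (C4) furnishes a uniform $L^1$ dominator, so $\mathbb E H^{n,m}(T) \to 0$ as $n,m\to\infty$. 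Theorem \ref{Chapter2-GronwallThm}(c) then gives $\mathbb E\bigl[\sup_{t\le T\wedge\sigma_R}|D^{n,m}_t|^{2p}\bigr]\to 0$ for every fixed $R$, which combined with the uniform tightness from the previous step produces Cauchy convergence locally uniformly in probability. The limit $X$ is c\`adl\`ag adapted; using (C3), (C4) and the $L^2$-isometry \eqref{Chapter1-covariation} one passes to the limit in \eqref{Chapter2-df Xn-}, so $X$ solves \eqref{Chapter2-equ1-}. Uniqueness is proved by the identical Gronwall argument applied to the difference of two hypothetical solutions, now with $H\equiv 0$.

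\textbf{Main obstacle.} The only substantive difficulty — and the reason the new stochastic Gronwall lemma is essential — is the $\sup_{s\in[-\tau,t]}|x(s)-y(s)|^2$ on the right-hand side of (C1): it blends $f$ and $g$, so the $g$-contribution to the quadratic variation cannot be peeled off by a Burkholder--Davis--Gundy estimate, and the classical Gronwall lemma fails to close after taking expectations. Theorem \ref{Chapter2-GronwallThm}(c) short-circuits this by producing an $L^p$ bound on $X^*$ directly from $\mathbb E[H(T)]$; its hypothesis $\mathbb E[X(\tau)\vert\mathcal F_0]\le\mathbb E[G(\tau)\vert\mathcal F_0]$ is routine after localisation by $\sigma_R$ thanks to \eqref{Chapter1-covariation} and the uniform bound (C4).
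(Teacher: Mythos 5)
Your high-level plan — Euler approximation, an a priori $L^{2p}$ bound via It\^o and Theorem \ref{Chapter2-GronwallThm}(c), a Cauchy estimate for the $X^{(n)}$, identification of the limit, and a Gronwall argument with $H\equiv 0$ for uniqueness — matches the structure of the paper's proof, and you correctly identify the stochastic Gronwall lemma as the device that resolves the supremum-norm/BDG conflict. However, the specific decomposition you propose for the Cauchy step does not close. You split
\[
f\bigl(s,X^{(n)}_{\cdot\wedge\kappa(n,s)}\bigr)-f\bigl(s,X^{(m)}_{\cdot\wedge\kappa(m,s)}\bigr)=\alpha^{n}_s+\bigl[f(s,X^{(n)})-f(s,X^{(m)})\bigr]+\beta^{m}_s,
\]
apply \ref{Chapter2-C1} only to the middle term, and push $\alpha^n,\beta^m$ into $H^{n,m}$, arguing via \ref{Chapter2-C3} and dominated convergence that $\mathbb{E}H^{n,m}(T)\to 0$. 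But \ref{Chapter2-C3} is mere pointwise continuity of $x\mapsto f(t,\omega,x)$, not uniform continuity on bounded sets of c\`adl\`ag paths. To conclude $\alpha^n_s = f(s,X^{(n)}_{\cdot\wedge\kappa(n,s)})-f(s,X^{(n)})\to 0$ you would need either uniform continuity or a fixed base point at which to invoke continuity; here the base point $X^{(n)}$ itself varies with $n$, and before the Cauchy step is complete there is no limit $X$ available. (The paper does use exactly this continuity-plus-dominated-convergence argument, but only later, in the limit-identification step, where $X$ is already fixed.) There is a second, independent obstruction on the $g$ side: $\int_U |g(X^{(n)}_{\cdot\wedge\kappa})-g(X^{(m)}_{\cdot\wedge\kappa})|^2\nu_s(\de\xi)$ cannot be split as a sum of a middle square $\int|g(X^{(n)})-g(X^{(m)})|^2\nu_s$ plus negligible errors without inflating the middle square by a constant $>1$, which \ref{Chapter2-C1} — a single inequality coupling the $f$-inner-product with exactly one copy of $\int|g-g|^2\nu_s$ — does not accommodate.

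The paper avoids both problems by applying \ref{Chapter2-C1} directly to the pair of frozen paths $x=X^{(n)}_{\cdot\wedge\kappa(n,s)}$, $y=X^{(m)}_{\cdot\wedge\kappa(m,s)}$. This produces the inner product $2\langle X^{(n)}_{\kappa(n,s)}-X^{(m)}_{\kappa(m,s)}, f(x)-f(y)\rangle$, which differs from the It\^o term $2\langle X^{(n)}_{s^-}-X^{(m)}_{s^-}, f(x)-f(y)\rangle$ only through the first argument of the inner product; since the correction enters linearly, it is bounded by $\tilde{K}_R(s)(|p^{(n)}_{s^-}|+|p^{(m)}_{s^-}|)$ using \ref{Chapter2-C4}, with no continuity of $f$ invoked. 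The supremum on the right-hand side of \ref{Chapter2-C1} then decomposes as $\sup_u|X^{(n)}_u-X^{(m)}_u|$ plus terms controlled by $\sup_u|p^{(n)}_u|$, $\sup_u|p^{(m)}_u|$, and by step \ref{Chapter2-i-} all error terms vanish in $L^1$. If you replace your $\alpha/\beta$ decomposition of the $f$-difference with the paper's decomposition of the first slot of the inner product, the Cauchy step closes and the rest of your outline goes through.
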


\begin{proof}  Let us define the remainder 
	\[
	p^{(n)}_t=X^{(n)}_{\kappa(n,t)}-X^{(n)}_t, \quad t\in [-\tau,\infty)\, . 
	\]
	Then $p^{(n)}$ is adapted and $p^{(n)}\big((k/n)^+\big)=0$ for every $k \in \mathbb{N}_0$. Further,
	\begin{equation}  
	\label{Chapter2-Xn=phi(Xn+pn)-}
	\begin{split}
	X^{(n)}_t 
	& = z_0 +\int_0^t f\left(s,\omega, X^{(n)}+ \mathbf{1}_{(\kappa(n,s),\kappa(n,s)+1/n)}p^{(n)}\right)\,\de s\\&\quad +\int_0^t \int_U g\left(s,\omega, 
	X^{(n)}
	+ \mathbf{1}_{(\kappa(n,s),\kappa(n,s)+1/n)} p^{(n)},\xi\right)\tilde{M}(\de s,\de \xi)\, .
	\end{split}
	\end{equation}   
	Fix $T > 0$ and define the stopping times 
	\[
	\tau^{(n)}_R:=\Big( \inf\left\lbrace t\ge 0: \left\lvert X^{(n)}_t\right\rvert > 
	\frac{R}{3}\right\rbrace \wedge T \Big)\mathbf{1}_{\{R > 3\sup_{s\in[-\tau,0]}\left\lvert z(s)\right\rvert\}}
	\]
	for given $R > 0$. Then  
	\[
	\left\lvert p^{(n) }_{t}\right\rvert \le \frac{2R}{3}, \left
	\lvert X^{(n) }_{t}\right\rvert \le \frac{R}{3},\quad t\in(0,
	\tau^{(n) }_R). 
	\]
	For $R > 3\sup_{s\in[-\tau,0]}\left\lvert z(s)\right\rvert$ the above inequalities extend to all $t\in [-\tau , \tau_R^{(n)})$ and 
	$\tau_R^{(n)} > 0$ due to the right continuity of $X_t^{(n)}$.

	  
	We will prove the following properties which complete the 
	proof of existence on $[0, T]$, and hence on $[0,\infty)$, since $T$ was arbitrary.  
	\begin{enumerate}[label=(\roman{*})] 
		\item \label{Chapter2-i-} For every $t\ge 0$, $   \mathbf{1}_{(0,\tau^{(n) }_R)}(t)\sup_{u\in(\kappa(n,t),t]}\left\lvert p^{(n)}_u\right\rvert\to 0$ in probability as $n\to\infty$.
		\item \label{Chapter2-ii-} $\mathbb{E}\sup_{u\in [0,T]}\left\lvert  X^{(n) }_{u\wedge \tau^{(n) }_R}\right\rvert^{2p}\le C(T,R,n,p)$, for some $C(T,R,n,p)$ satisfying \[\lim_{n\to\infty}C(T,R,n,p)=\tilde{C}(T,p) \mbox{ for all } p \in (0,1),\, R>0.\]
		\item \label{Chapter2-iii-} $\lim_{R\to \infty}\limsup_{n\to\infty}\mathbb{P}\left\lbrace \tau^{(n) }_R < T\right\rbrace=0$.
		\item \label{Chapter2-iv-} $\forall \varepsilon > 0, \lim_{n,m\to\infty}\mathbb{P}\left\lbrace \sup_{t\in[0,T]}\left\lvert X^{(n) }_t-X^{(m) }_t\right\rvert >\varepsilon\right\rbrace=0$.
		\item \label{Chapter2-v-}  $\exists X: \forall \varepsilon>0, \lim_{n\to\infty}\mathbb{P}\left\lbrace \sup_{t\in[0,T]}\left\lvert X^{(n) }_t-X_t\right\rvert >\varepsilon\right\rbrace=0$ and $X$ is a strong solution of equation \eqref{Chapter2-equ1-} on $[0, T]$.
	\end{enumerate}
    \paragraph*{Proof of \ref{Chapter2-i-}:} Fix $t>0$ and $\varepsilon >0$. Using \eqref{Chapter2-df Xn-} and Hypothesis \ref{Chapter2-hyp1-} (C4), we have

		\begin{align*}
		\MoveEqLeft[0]\mathbb{P}\left\lbrace \sup_{u\in(\kappa(n,t),t]}\left\lvert p^{(n)}_u\right\rvert\ge \varepsilon,\tau^{(n)}_R>t\right\rbrace\\ 
                  & \le \mathbb{P}\left\lbrace \int_{\kappa(n,t)}^t \left\lvert f(s,\omega, X^{(n)}+\mathbf{1}_{(\kappa(n,s),\kappa(n,s)+\frac 1n)}p^{(n)})\right\rvert\, \de s\ge \varepsilon/2, \tau^{(n)}_R> t \right\rbrace \\
                    & +\mathbb{P}\left\lbrace \left\lvert\sup_{u\in(\kappa(n,t),t]}\int_{\kappa(n,t)}^u\int_U \mathbf{1}_{\{s\leq\tau^{(n)}_R 
			\}} g\left(s,\omega, X^{(n)}+\mathbf{1}_{(\kappa(n,s),s)}p^{(n)},\xi\right)\tilde{M}(\de s,\de \xi)\right\rvert\ge \varepsilon/2, \tau^{(n)}_R> t\right\rbrace \\ 
              & \le \mathbb{P}\left\lbrace\int_{\kappa(n,t)}^t \tilde{K}_R(s)\,\de s 
		\ge \varepsilon/2\right\rbrace \\&+\frac{4}{\varepsilon^2} \mathbb{E} \left(\sup_{u\in(\kappa(n,t),t]} \left\lvert \int_{\kappa(n,t)}^u 
		\int_U \mathbf{1}_{\left\lbrace s\leq\tau^{(n)}_R 
			\right\rbrace} g\left(s,\omega, X^{(n)}+\mathbf{1}_{(\kappa(n,s),s)}p^{(n)},\xi\right) 
		\tilde{M}(\de s,\de \xi)\right\rvert^2  \right)
		\end{align*}

                Using Burkholder-Davis-Gundy's inequality,  we continue as follows
	{\small
		\begin{align*}
		& \le \frac{2}{\varepsilon}\int_{\kappa(n,t)}^t \tilde{K}_R(s)\,\de s 
		+\frac{C}{\varepsilon^2}\mathbb{E} \left[\int_{\kappa(n,t)}^{\cdot}  \int_U \mathbf{1}_{\left\lbrace s\leq\tau^{(n)}_R 
			\right\rbrace} g\left(s,\omega, X^{(n)}+\mathbf{1}_{(\kappa(n,s),s)}p^{(n)},\xi\right) \tilde{M}(\de s,\de \xi)\right]_t
		\\ & \le \frac{2}{\varepsilon}\int_{\kappa(n,t)}^t \tilde{K}_R(s)\,\de s 
		+\frac{C}{\varepsilon^2}\mathbb{E} \int_{\kappa(n,t)}^{t}  \int_U \mathbf{1}_{\left\lbrace s\leq\tau^{(n)}_R 
			\right\rbrace} \left\lvert g\left(s,\omega, X^{(n)}+\mathbf{1}_{(\kappa(n,s),s)}p^{(n)},\xi\right) \right\rvert^2
		\nu_s(\de \xi)\de s\\& \le \frac{2}{\varepsilon}\int_{\kappa(n,t)}^t \tilde{K}_R(s)\,\de s 
		+\frac{C}{\varepsilon^2}\mathbb{E} \left( \int_{\kappa(n,t)}^{t}  \tilde{K}_R(s)\,\de s \right) 
		= \left(\frac{2}{\varepsilon} + \frac{C}{\varepsilon^2} \right) 
		\int_{\kappa(n,t)}^t \tilde{K}_R(s)\, \de s\, , 
		\end{align*}}
	so
	\[
	\limsup_{n\to \infty} 
	\mathbb{P}\left\lbrace \sup_{u\in(\kappa(n,t),t]}\left\lvert p^{(n)}_u\right\rvert\ge \varepsilon,  
	\tau^{(n)}_R> t\right\rbrace = 0
	\]
	which implies \ref{Chapter2-i-} since $\varepsilon>0$ was arbitrary.

        \paragraph*{Proof of \ref{Chapter2-ii-}:} Using It\^o's formula, we obtain 
	\begin{equation*} 
	\begin{aligned} 
	\left\lvert X_t^{(n)}\right\rvert^2 & = \left\lvert z_0\right\rvert^2
	+ \int_0^t 
	2\left\langle X^{(n)}_{s-},f\left(s, \omega, 
	X^{(n)}+\mathbf{1}_{(\kappa(n,s),s+\frac 1n)}p^{(n)}\right) \right\rangle \,\de s
	\\&\quad +\int_0^t\int_U \left\lvert  g\left(s,\omega,X^{(n)}+\mathbf{1}_{(\kappa(n,s),s)}p^{(n)},  \xi 
	\right)\right\rvert^2\nu_s(\de \xi)\,\de s
	+ M^{(n)}_t \, . 
	\end{aligned}
	\end{equation*}
	where 
	\begin{align*}
	M^{(n)}_t&:= \int_0^t\int_U 2\left\langle X^{(n)}_{s-}, g\left(s,\omega,X^{(n)}+\mathbf{1}_{(\kappa(n,s),s)}p^{(n)},  \xi 
	\right)\right\rangle \tilde{M}(\de s, \de \xi)\\&\quad +\left[\int_0^\cdot\int_U g\left(s,\omega,X^{(n)}+\mathbf{1}_{(\kappa(n,s),s)}p^{(n)},  \xi
	\right)\tilde{M}(\de s,\de \xi)\right]_t\\&\quad -\int_0^t\int_U \left\lvert g\left(s,\omega,X^{(n)}+\mathbf{1}_{(\kappa(n,s),s)}p^{(n)},  \xi
	\right)\right\rvert^2\nu_s(\de \xi)\de s
	\end{align*}
	and $\left(M^{(n)}_{t\wedge\tau_R^{(n)}}\right)_{t\geq 0}$ is a local martingale. 
        Using  (C2) and (C4), we have 
	\begin{equation*} 
	\begin{aligned}
	\left\lvert X^{(n)}_{t\wedge \tau_R^{(n)}}\right\rvert^2
	& \le \left\lvert z_0\right\rvert^2+\int_0^{t\wedge \tau_R^{(n)}} 2\left\langle X^{(n)}_{s^-}-X^{(n)}_{\kappa(n,s)},f\left(s, \omega,  X^{(n)}_{\cdot\wedge\kappa(n,s)} \right) \right\rangle \de s
	\\ 
	& \quad + \int_0^{t\wedge \tau_R^{(n)}}  K(s)
	\left(1+\sup_{u\in [-\tau,s]}\left\lvert X^{(n)}_u\right\rvert^2\right)\,\de s+M^{(n)}_{t\wedge\tau_R^{(n)}}\\
	& \le\left\lvert z_0\right\rvert^2+2 \int_0^t  \mathbf{1}_{\left\lbrace s\in (0, \tau_R^{(n)}]\right\rbrace} \tilde{K}_R(s)\left\lvert p^{(n)}_{s^-}\right\rvert \de s 
	\\ 
	& \quad + \int_0^t 
	K(s) \left[1+\sup_{u\in [-\tau,0]}\left\lvert z_u\right\rvert^2+\sup_{u\in [0,s]}\left\lvert X^{(n)}_{u\wedge \tau_R^{(n)}}\right\rvert^2\right]\de s +M^{(n)}_{t\wedge\tau_R^{(n)}}\\&= \int_{0}^{t} K(s)  \sup_{u\in [0,s]}\left\lvert X^{(n)}_{u\wedge \tau_R^{(n)}} \right\rvert^2\de s+H^{n,R}_t+M^{(n)}_{t\wedge\tau_R^{(n)}}.
	\end{aligned}
	\end{equation*}
	where
	\[H^{n,R}_t:= \left\lvert z_0\right\rvert^2+\int_{0}^{t}\left[K(s)\left(1+ \sup_{u\in [-\tau,0]}\left\lvert z_u\right\rvert^2\right)+2\cdot\mathbf{1}_{\left\lbrace s\in (0, \tau_R^{(n)}]\right\rbrace} \tilde{K}_R(s)\left\lvert p^{(n)}_{s^-}\right\rvert\right]\de s\, .\]
	Using Theorem \ref{Chapter2-GronwallThm}, we get for $p\in(0,1)$ that
	\[\mathbb{E}\left[\sup_{u\in [0,T]}\left\lvert X^{(n)}_{u\wedge \tau_R^{(n)}} \right\rvert^{2p}\right]\leq C(T,p)\left(\mathbb{E}H^{n,R}_T\right)^p=:C(T,R,n,p)\]
	where, by \ref{Chapter2-i-} and the dominated convergence theorem, 
	\[\lim_{n\to\infty}\mathbb{E}H^{n,R}_T= \mathbb{E} \left[\left\lvert z_0\right\rvert^2+\int_0^T K(s)\left(1+\sup_{u\in [-\tau,0]}\left\lvert z_u\right\rvert^2\right)\de s\right].\]
	Hence $\lim_{n\to\infty} C(T,R,n,p)=:\tilde{C}(T,p)$ exists and is independent of $R>0$ and hence \ref{Chapter2-ii-} holds. 

        \paragraph*{Proof of \ref{Chapter2-iii-}:} We have, for $p\in (0,1)$,
	\begin{align*}
	\MoveEqLeft[2]\limsup_{R\to\infty}\limsup_{n\to \infty}\,  
	\mathbb{P}\left\lbrace 
	\sup_{t\in[0,\tau^{(n)}_R]}\left\lvert X^{(n)}_t\right\rvert\ge \frac{R}{4};\tau^{(n)}_R 
	< T \right\rbrace 
	\\&\le \limsup_{R\to\infty}\limsup_{n\to \infty}\mathbb{P}\left\lbrace 
	\sup_{t\in[0,T\wedge\tau^{(n)}_R]}\left\lvert X^{(n)}_t\right\rvert \ge \frac{R}{4}
	\right\rbrace \\ 
	& \le\limsup_{R\to\infty}\left(\frac4R\right)^{2p}\limsup_{n\to\infty} 
	C(T,R,n,p) \leq \limsup_{R\to\infty}\left(\frac4R\right)^{2p}  \tilde{C}(T,p)=0.
	\end{align*} 
	It follows that  
	\begin{align*} 
	\limsup_{R\to\infty}\limsup_{n\to \infty}\mathbb{P}\left\lbrace \tau^{(n)}_R 
	< T\right\rbrace 
	\le\limsup_{R\to\infty}\limsup_{n\to \infty}\mathbb{P}\left\lbrace  
	\sup_{t\in[0,\tau^{(n)}_R]}\left\lvert X^{(n)}_t\right\rvert\ge \frac{R}{4};\tau^{(n)}_R 
	< T \right\rbrace=0
	\end{align*}
	which completes the proof of \ref{Chapter2-iii-}.  
	
	\paragraph*{Proof of \ref{Chapter2-iv-}:} Let $\tau^{n,m}_R:=
	\tau^{(n)}_R\wedge \tau^{(m)}_R$. Using It\^o's formula, we have
	\begin{align*}
	\MoveEqLeft[0]\left\lvert X^{(n)}_t- X^{(m)}_t\right\rvert^2 \\&= M^{n,m}_t+\int_0^t 2\Big\langle X^{(n)}_{s^-}-X^{(m)}_{s^-}, 
	f\left(s,\omega, X^{(n)}_{\cdot\wedge\kappa(n,s)}\right)-f\left(s,\omega, 
	X^{(m)}_{\cdot\wedge\kappa(m,s)}\right)\Big\rangle\de s\\&+ \int_0^t\int_U\left\lvert  g\left(s,\omega, 
	X^{(n)}_{\cdot\wedge\kappa(n,s)},\xi \right)-g\left(s,\omega, 
	X^{(m)}_{\cdot\wedge\kappa(m,s)},\xi\right)\right\rvert^2\nu_s(\de \xi)\de s
	\end{align*}
	where $\left(M^{n,m}_{t\wedge\tau^{n,m}_R} \right)_{t\geq 0}$ is a local  martingale starting from zero.
	Hypothesis (C1) 
        implies
	{\small
		\begin{align*}
		\MoveEqLeft[0]\left\lvert X^{(n)}_{t\wedge \tau^{n,m}_R}- X^{(m)}_{t\wedge \tau^{n,m}_R}\right\rvert^2 
		\\& \le 
		\int_0^{t\wedge \tau^{n,m}_R} 2\Big\langle X^{(n)}_{s^-}-X^{(m)}_{s^-}-X^{(n)}_{\kappa(n,s)}+X^{(m)}_{\kappa(m,s)}, 
		f\left(s,\omega, X^{(n)}_{\cdot\wedge\kappa(n,s)}\right) -f \left(s, \omega,X^{(m)}_{\cdot\wedge\kappa(m,s)}\right)\Big\rangle
		\de s \\ 
		& \quad+ \int_0^{t\wedge \tau^{n,m}_R} L_R (s) \sup_{u\in[-\tau,s]}\left\lvert  X^{(n)}_u+\mathbf{1}_{(\kappa(n,s),s]}(u)p^{(n)}_u-X^{(m)}_u-\mathbf{1}_{(\kappa(m,s),s]}p^{(m)}_u\right\rvert^2\de s+M^{n,m}_{t\wedge\tau^{n,m}_R}\\
		& \le  2\int_0^t \mathbf{1}_{(0,\tau^{n,m}_R)}(s)\Bigg\lbrace 
		\tilde{K}_R(s)\left\lvert p^{(n)}_{s^-}+p^{(m)}_{s^-}\right\rvert
		+ L_R(s)R\left(\sup_{u\in(\kappa(n,s),s]}\left\lvert p^{(n)}_{u}\right\rvert+\sup_{u\in(\kappa(m,s),s]}\left\lvert p^{(m)}_{u}\right\rvert\right)\Bigg\rbrace\de s   \\ 
		&\quad 
		+ \int_0^t L_R (s) \sup_{u\in[0,s]}\left\lvert X^{(n)}_{u\wedge\tau^{n,m}_R} 
		- X^{(m)}_{u\wedge\tau^{n,m}_R}\right\rvert^2\de s+M^{n,m}_{t\wedge\tau^{n,m}_R}
		\\&\leq \int_0^t L_R (s) \sup_{u\in[0,s]}\left\lvert X^{(n)}_{u\wedge\tau^{n,m}_R} 
		- X^{(m)}_{u\wedge\tau^{n,m}_R}\right\rvert^2\de s+H^{n,m,R}_t+M^{n,m}_{t\wedge\tau^{n,m}_R},
		\end{align*}}
	where 
	\begin{align*}
	H^{n,m,R}_t&:=2\int_0^t  \mathbf{1}_{(0,\tau^{n,m}_R)}(s)\Bigg\lbrace 
	\tilde{K}_R(s)\left\lvert p^{(n)}_{s^-}+p^{(m)}_{s^-}\right\rvert
	\\&\qquad\qquad\qquad\qquad\qquad+ L_R(s)R\left(\sup_{u\in(\kappa(n,s),s]}\left\lvert p^{(n)}_{u}\right\rvert+\sup_{u\in(\kappa(m,s),s]}\left\lvert p^{(m)}_{u}\right\rvert\right)\Bigg\rbrace\de s.
	\end{align*}
	Using Theorem \ref{Chapter2-GronwallThm}, we have for $p\in(0,1)$ that 
	\begin{equation}
	\mathbb{E}\left[\sup_{t\in[0,T]}\left\lvert X^{(n)}_{t\wedge\tau^{n,m}_R} 
	- X^{(m) }_{t\wedge\tau^{n,m}_R}\right\rvert^{2p}\right]\le C(T,R,p)\left(\mathbb{E}H^{n,m,R}_T\right)^p.
	\end{equation}
	Hence for $a>0$,
	\begin{align*}
	\mathbb{P} & \left\lbrace \sup_{t\in[0,T]}\left\lvert X^{(n)}_t-X^{(m)}_t\right\rvert\ge 
	a\right\rbrace \\ 
	& \qquad \le \mathbb{P}\left\lbrace T>\tau^{(n)}_R\right\rbrace 
	+\mathbb{P}\left\lbrace T>\tau^{(m)}_R\right\rbrace +
	\mathbb{P}\left\lbrace \sup_{t\in[0,\tau^{n,m}_R]}\left\lvert X^{(n)}_t-X^{(m)}_t\right\rvert\ge 
	a\right\rbrace \\ 
	& \qquad \le\mathbb{P}\left\lbrace T>\tau^{(n)}_R\right\rbrace 
	+ \mathbb{P}\left\lbrace T>\tau^{(m)}_R\right\rbrace 
	+ \frac{1}{a^{2p}}\mathbb{E}\left[\sup_{t\in[0,T]}\left\lvert X^{(n)}_{t\wedge\tau^{n,m}_R} 
	- X^{(m) }_{t\wedge\tau^{n,m}_R}\right\rvert^{2p}\right] \\ 
	& \qquad \le \mathbb{P}\left\lbrace T>\tau^{(n)}_R\right\rbrace 
	+ \mathbb{P}\left\lbrace T>\tau^{(m)}_R\right\rbrace 
	+a^{-2p}C(T,R,p)\left(\mathbb{E}H^{n,m,R}_T\right)^p \, . 
	\end{align*}
	\ref{Chapter2-i-} and dominated convergence now imply that 
	\[
	\limsup_{n,m\to \infty}\mathbb{E}H^{n,m,R}_T=0 
	\]
	and using \ref{Chapter2-iii-}, we get 
	\begin{align*}
	\MoveEqLeft[1]\limsup_{n,m\to\infty}\mathbb{P} 
	\left\lbrace \sup_{t\in[0,T]}\left\lvert X^{(n) }_t-X^{(m) }_t\right\rvert\ge 
	a\right\rbrace \\ 
	& 
	\le  \lim_{R\to\infty}\limsup_{n,m\to\infty} 
	\left[\mathbb{P}\left\lbrace T>\tau^{(n) }_R\right\rbrace 
	+ \mathbb{P}\left\lbrace T > \tau^{(m) }_R\right\rbrace+a^{-2p}C(T,R,p)\left(\mathbb{E}H^{n,m,R}_T\right)^p\right]=0,
	\end{align*}
	so \ref{Chapter2-iv-} is obtained.

	\paragraph*{Proof of \ref{Chapter2-v-}:} Since the space $ 
	\text{C\`adl\`ag}\,([-\tau,T],\mathbb{R}^d)$ is complete, via the Borel-Cantelli lemma, \ref{Chapter2-iv-} yields that there exists an adapted c\`adl\`ag process $X$ such that
	\[ 
	\lim_{n\to\infty}\mathbb{P}\left\lbrace \sup_{t\in[0,T]}\left\lvert X^{(n)}_t 
	- X_t\right\rvert\ge \varepsilon\right\rbrace=0. 
	\]
	We have to show that, for a subsequence of 
	$n\in\mathbb{N}$, all terms of equation \eqref{Chapter2-Xn=phi(Xn+pn)-} converge almost surely to the corresponding
        terms of equation \eqref{Chapter2-equ1-}. We have 
	\begin{align*}
	\MoveEqLeft[2]\limsup_{n\to\infty}\int_0^T\mathbb{P} \left\lbrace \sup_{u\in[0,t]}\left\lvert X^{(n)}_{u\wedge\kappa(n,t)} 
	-X_{u}\right\rvert\ge \varepsilon\right\rbrace \de t \\
	& \le\limsup_{n\to\infty}\int_0^T\mathbb{P}\left\lbrace \sup_{u\in[0,T]}\left\lvert X^{(n)}_u 
	- X_u\right\rvert\ge \varepsilon\right\rbrace \de t+ 
	\\&\quad+\limsup_{n\to\infty}\mathbb{E}\int_0^T\mathbf{1}_{\left\lbrace \sup_{u\in(\kappa(n,t),t]}\left\lvert 
	X_{\kappa(n,t)}-X_{u}\right\rvert\ge \varepsilon\right\rbrace}\de t=0\, . 
	\end{align*} 
	So we can find a subsequence, say 
	$\left\lbrace n_l\right\rbrace_{l\in\mathbb{N}}$, such that as $l\to\infty$,
	\[\sup_{u\in[0,t]}\left\lvert X^{(n_l)}_{u\wedge\kappa(n_l,t)} 
	-X_{u}\right\rvert\to 0 
	\quad \de t\otimes\mathbb{P}\text{-a.e.}\,  (t,\omega)\in [0,T]\times \Omega.
	\]
	Now let us define
	\[
	S(T):=\sup_{l\in\mathbb{N}} \sup_{t\in[0,T]}\sup_{u\in[0,t]}\left\lvert X^{(n_l) }_{u\wedge\kappa(n_l,t)}\right\rvert \, , 
	\]
	then
	\[
	S(T)<\infty\quad \mathbb{P}\text{-a.s.} 
	\]
	Therefore, using \ref{Chapter2-C3}, \ref{Chapter2-C4} and dominated convergence, we obtain that
	\[
	\lim_{l\to\infty}\int_0^t f\left( s,\omega,  X^{(n_l) }_{\cdot\wedge\kappa(n_l,s)}\right)\de s 
	= \int_0^t f\left(s,\omega,X\right)\de s\quad \mathbb{P}\text{-a.s.} 
	\]
	Let $\tau (R):=\inf\left\lbrace t\ge 0:S(t)>R\right\rbrace\wedge T$. Fix $t\in [0,T]$.  
	By (C1) and dominated convergence and 
	\begin{align*}
	\MoveEqLeft[3]\lim_{l\to\infty}\mathbb{E}\left\lvert \int_0^{t\wedge\tau (R)}\int_U\left[g\left(s,
	\omega,X^{(n_l) }_{\cdot\wedge\kappa(n_l,s)},\xi\right)-g\left( s,\omega, 
	X,\xi\right)\right]\tilde{M}(\de s,\de \xi)\right\rvert^2 \\ 
	& =\lim_{l\to\infty}\mathbb{E}\int_0^t\int_U \mathbf{1}_{\left\lbrace s\le \tau (R) 
		\right\rbrace}\left\lvert g\left(s,\omega,X^{(n_l) }_{\cdot\wedge\kappa(n_l,s)},\xi\right) 
	-g\left( s,\omega,X,\xi\right)\right\rvert^2\nu_s(\de \xi)\de s=0,
	\end{align*}
	so, for $t \in [0,T]$,
	\begin{align*}
	\MoveEqLeft[3]\mathbb{P}\left\lbrace \left\lvert \int_0^t\int_U\left[g\left(s,
	\omega,X^{(n_l) }_{\cdot\wedge\kappa(n_l,s)},\xi\right)-g\left( s,\omega, 
	X,\xi\right)\right]\tilde{M}(\de s,\de \xi)\right\rvert>\varepsilon\right\rbrace \\ 
	& \le \mathbb{P}\left\lbrace \left\lvert \int_0^{t\wedge\tau (R)}\int_U\left[g\left(s,
	\omega,X^{(n_l) }_{\cdot\wedge\kappa(n_l,s)},\xi\right)-g\left( s,\omega, 
	X,\xi\right)\right]\tilde{M}(\de s,\de \xi)\right\rvert>\varepsilon\right\rbrace\\&\quad+\mathbb{P}\left\lbrace t>\tau (R) 
	\right\rbrace.
	\end{align*}
	Fix some sufficiently large $R$ such that the second term on the right hand side is less than 
	$\delta>0$, then taking the limit $l\to\infty$ implies
	\[ 
	\lim_{l\to \infty}\mathbb{P} 
	\left\lbrace \left\lvert \int_0^t\int_U\left[g\left(s,
	\omega,X^{(n_l) }_{\cdot\wedge\kappa(n_l,s)},\xi\right)-g\left( s,\omega, 
	X,\xi\right)\right]\tilde{M}(\de s,\de \xi)\right\rvert > 
	\varepsilon\right\rbrace\le \delta\]
	where  $\delta>0$ is arbitrary. Therefore 
	\[ 
	\int_0^t\int_U g\left(s,
	\omega,X^{(n_l) }_{\cdot\wedge\kappa(n_l,s)},\xi\right)\tilde{M}(\de s,\de \xi)\to \int_0^t\int_U g\left( s,\omega, 
	X,\xi\right)\tilde{M}(\de s,\de \xi)\quad \text {in probability} 
	\]
	and for some subsequence $n_{l_k}$ the above convergence is $\mathbb{P}-a.s$. 
	Therefore $X$ is a solution of equation \eqref{Chapter2-equ1-} on $[0, T]$.
	\paragraph*{Uniqueness:} Let $X$ and $Y$ be two solutions of equation \eqref{Chapter2-equ1-} and define 
	\[
	\tau(R):=\inf\left\lbrace t\ge 0; \left\lvert X_t\right\rvert>R \text{  or  } \left\lvert 
	Y_t\right\rvert>R\right\rbrace \, . 
	\]
	Then 
	\begin{align*}
	\left\lvert X_{t\wedge \tau(R)}-Y_{t\wedge \tau(R)}\right\rvert^2 & = \int_0^{t\wedge \tau(R)} \Big\lbrace 2\left\langle X_{s^-}-Y_{s^-}, 
	f\left( s, \omega,X\right)-f\left(s, \omega , Y \right)\right\rangle\\ 
	& \quad + \int_U \left\lvert g\left( s,\omega , 
	X,\xi\right)-g\left(s,\omega,Y,\xi\right)\right\rvert^2\nu_s(\de \xi)\Big\rbrace\de s+M_{t\wedge\tau(R)} \\ 
	& \le \int_0^t \mathbf{1}_{\{s\leq \tau(R)\}}L_R(s)\sup_{u\in[0,s]}  
	\left\lvert X_{u}-Y_{u}\right\rvert^2\de s+M_{t\wedge\tau(R)}
	\\ & \le \int_0^t L_R(s)\sup_{u\in[0,s]}  
	\left\lvert X_{u\wedge \tau(R)}-Y_{u\wedge \tau(R)}\right\rvert^2\de s+M_{t\wedge\tau(R)}
	\end{align*} 
	where $\left(M_{t\wedge\tau(R)}\right)_{t\geq 0}$ is a local martingale starting from zero. Using Theorem \ref{Chapter2-GronwallThm}, for $p\in(0,1)$ we have 
	\begin{align*}
	\mathbb{E}\left[\sup_{s\in[0,T]}\left\lvert X_{s\wedge \tau(R)}-Y_{s\wedge \tau(R)}\right\rvert^{2p}\right]\leq 0. 
	\end{align*}
	Therefore $X_{s\wedge \tau(R)}-Y_{s\wedge \tau(R)}=0$ a.s.~and uniqueness is proved.
\end{proof}
\bibliographystyle{plain}
\nocite{prevot2007concise}
\nocite{gyongy2013note}
\nocite{kumar2014strong}

\bibliography{biblio}

\begin{thebibliography}{10}

\bibitem{applebaum2006martingale}
{Applebaum, David}.
\newblock {Martingale-valued measures, Ornstein-Uhlenbeck processes with jumps
  and operator self-decomposability in Hilbert space}.
\newblock In {\em {Memoriam Paul-Andr{\'e} Meyer}}, pages 171--196. Springer,
  2006.

\bibitem{CS13}
{Chueshov, Igor and Scheutzow, Michael}.
\newblock {Invariance and monotonicity for stochastic delay differential
  equations}.
\newblock {\em {Discrete Contin.~Dyn.~Syst.~Ser.~B}}, {18}:{1533--1554},
  {2013}.

\bibitem{D72}
{Dellacherie, Claude}.
\newblock {\em {Capacit\'es et Processus Stochastiques}}.
\newblock {Springer}, {1972}.

\bibitem{el1990martingale}
{El Karoui, Nicole and M{\'e}l{\'e}ard, Sylvie}.
\newblock {Martingale measures and stochastic calculus}.
\newblock {\em {Probability Theory and Related Fields}}, {84}({1}):{83--101},
  {1990}.

\bibitem{gyongy2013note}
{Gy{\"o}ngy, Istvan and Sabanis, Sotirios}.
\newblock {A note on Euler approximations for stochastic differential equations
  with delay}.
\newblock {\em {Applied Mathematics \& Optimization}}, {68}({3}):{391--412},
  {2013}.

\bibitem{kruse2018discrete}
{Kruse, Raphael and Scheutzow, Michael}.
\newblock {A discrete stochastic Gronwall lemma}.
\newblock {\em {Mathematics and Computers in Simulation}}, 143:{149--157},
  2018.

\bibitem{kumar2014strong}
{Kumar, Chaman and Sabanis, Sotirios}.
\newblock {Strong convergence of Euler approximations of stochastic
  differential equations with delay under local Lipschitz condition}.
\newblock {\em {Stochastic Analysis and Applications}}, {32}({2}):{207--228},
  {2014}.

\bibitem{lenglart1977relation}
{Lenglart, Erik}.
\newblock {Relation de domination entre deux processus}.
\newblock {\em {Ann. Inst. H. Poincar{\'e} Sect. B (NS)}},
  {13}({2}):{171--179}, {1977}.

\bibitem{mao2007stochastic}
{Mao, Xuerong}.
\newblock {\em {Stochastic Differential Equations and Applications}}.
\newblock {Elsevier}, {Second} edition, {2007}.

\bibitem{mehri2018propagation}
{Mehri, Sima and Scheutzow, Michael and Stannat, Wilhelm and Zangeneh, Bijan
  Z}.
\newblock {Propagation of chaos for stochastic spatially structured neuronal
  networks with delay driven by jump diffusions}.
\newblock {\em {arXiv preprint:1805.01654v3, to appear in \textit{The Annals of
  Applied Probability}}}, {2019}.

\bibitem{MS97}
{Mohammed, Salah and Scheutzow, Michael}.
\newblock {Lyapunovexponents of linear stochastic functional differential
  equations. Part II: Examples and case studies}.
\newblock {\em {Annals of Probability}}, {25}:{1210--1240}, {1997}.

\bibitem{MS03}
{Mohammed, Salah and Scheutzow, Michael}.
\newblock {The stable manifold theorem for nonlinear stochastic systems with
  memory I: Existence of the semiflow}.
\newblock {\em {Journal of Functional Analysis}}, {205}:{271--305}, {2003}.

\bibitem{prevot2007concise}
{Pr{\'e}v{\^o}t, Claudia and R{\"o}ckner, Michael}.
\newblock {\em {A Concise Course on Stochastic Partial Differential
  Equations}}, volume {1905}.
\newblock {Springer}, {2007}.

\bibitem{ren2008remarks}
{Ren, Yong and Lu, Shiping and Xia, Ningmao}.
\newblock {Remarks on the existence and uniqueness of the solutions to
  stochastic functional differential equations with infinite delay}.
\newblock {\em {Journal of Computational and Applied Mathematics}},
  {220}({1-2}):{364--372}, {2008}.

\bibitem{revuz1999continuous}
{Revuz, Daniel and Yor, Marc}.
\newblock {\em {Continuous Martingales and Brownian Motion}}, volume {293}.
\newblock {Springer Science \& Business Media}, {Third} edition, {1999}.

\bibitem{scheutzow2013stochastic}
{Scheutzow, Michael}.
\newblock {A stochastic Gronwall lemma}.
\newblock {\em {Infinite Dimensional Analysis, Quantum Probability and Related
  Topics}}, {16}({02}):{1350019}, 2013.

\bibitem{von2010existence}
{von Renesse, Max and Scheutzow, Michael}.
\newblock {Existence and uniqueness of solutions of stochastic functional
  differential equations}.
\newblock {\em {Random Operators and Stochastic Equations}},
  {18}({3}):{267--284}, {2010}.

\bibitem{walsh1986introduction}
{Walsh, John B}.
\newblock {An introduction to stochastic partial differential equations}.
\newblock In {\em {{\'E}cole d'{\'E}t{\'e} de Probabilit{\'e}s de Saint Flour
  XIV-1984}}, pages 265--439. Springer, 1986.

\bibitem{wei2007existence}
{Wei, Fengying and Wang, Ke}.
\newblock {The existence and uniqueness of the solution for stochastic
  functional differential equations with infinite delay}.
\newblock {\em {Journal of Mathematical Analysis and Applications}},
  331(1):516--531, 2007.

\bibitem{zhang2018singular}
{Zhang, Xicheng and Zhao, Guohuan}.
\newblock {Singular Brownian diffusion processes}.
\newblock {\em {Communications in Mathematics and Statistics}},
  6(4):{533--581}, 2018.

\bibitem{zhu2017razumikhin}
{Zhu, Quanxin}.
\newblock {Razumikhin-type theorem for stochastic functional differential
  equations with L{\'e}vy noise and Markov switching}.
\newblock {\em {International Journal of Control}}, {90}({8}):{1703--1712},
  {2017}.

\end{thebibliography}

\end{document}